\newtheorem{lem}{Lemma}
\newtheorem{thm}{Theorem}
\newtheorem{ass}{Assumption}
\newtheorem{rem}{Remark}
\begin{document}

\title{LQG Differential Stackelberg Game under Nested Observation Information
Pattern}

\author{Zhipeng Li, Dami\'{a}n Marelli$^{*}$,  Minyue Fu,\emph{~Fellow,~IEEE}, and  Huanshui Zhang,\emph{~Senior Member,~IEEE}
\thanks{The work was supported by the National Natural
 Science Foundation of China (Grant Nos. 61633014, U1701264 and 61803101), Australian Research Council under Grant DP200103507, and the Argentinean Agency for Scientific and Technological Promotion (PICT-201-0985).}
\thanks{Zhipeng Li is with College of Engineering, Qufu Normal University, Rizhao, 276826, China.}
\thanks{Damian Marelli is with School of Guangdong University of Technology, Guangzhou, 510006, China. He is also with French Argentine
International Center for Information and Systems Sciences, National Scientific and Technical Research Council, Argentina (damianenviena@gmail.com).}
\thanks{Minyue Fu is with School of Electrical Engineering and Computing, University of Newcastle, NSW 2308, Australia. }
\thanks{Huanshui Zhang is with School of Control Science and Engineering, Shandong
University, Jinan, 250061, China.}
\thanks{$^{*}$Corresponding author}
}

\maketitle
\begin{abstract}
  We investigate the linear quadratic Gaussian Stackelberg game under a class of nested observation information pattern. Two decision makers implement control strategies relying on different information sets: The follower uses its observation data to design its strategy, whereas the leader implements its strategy using global observation data. We show that the solution requires solving a new type of forward-backward stochastic differential equations whose drift terms contain two types of conditional expectation terms associated to the adjoint variables. We then propose a method to find the functional relations between each adjoint pair, i.e., each pair formed by an adjoint variable and the conditional expectation of its associated state. The proposed method follows a layered pattern. More precisely, in the inner layer, we seek the functional relation for the adjoint pair under the $\sigma$-sub-algebra generated by follower's observation information; and in the outer layer, we look for the functional relation for the adjoint pair under the $\sigma$-sub-algebra generated by leader's observation information. Our result shows that the optimal open-loop solution admits an explicit feedback type representation. More precisely, the feedback coefficient matrices satisfy tuples of coupled forward-backward differential Riccati equations, and feedback variables are computed by Kalman-Bucy filtering.
\end{abstract}

\begin{IEEEkeywords}
Differential Stackelberg game,  separation principle, linear quadratic Gaussian optimal control, backward stochastic differential equations (BSDEs), Kalman filter.
\end{IEEEkeywords}

\section{Introduction}

Stackelberg game was introduced by H. Von Stackelberg as a static game in the context of static economic equilibrium in 1934 \cite{Stackelberg1952}, when he defined a concept of a hierarchical solution for markets where some firms have power of domination over others. This solution is now known as the Stackelberg equilibrium which, in terms of two-person nonzero-sum games, involves two players with asymmetric roles, one leader and one follower. On the other hand, the study of differential games was initiated by Isaacs in 1954 \cite{Isaacs1954}. Different from the static optimization problem, the related equality constraint becomes a differential dynamic system.  With the rise of differential games, the differential Stackelberg game entered the control literature through the early works ~\cite{Cruz1978,Medanic1978Closed,Basar1980,Dockner2000,Yong2002}. Since then, stochastic differential Stackelberg games have been investigated by many authors and have been used in many applications. For example, Castanon {\em et. al.}~\cite{Castanon1975} gave explicit solutions to the linear quadratic Gaussian (LQG) Stackelberg game under asymmetric information where the follower knows the leader's control; Oksendal {\em et. al.}~\cite{Oksendal2014} derived a maximum principle for the controlled jump diffusion process and applied it to the news vendor problem; Bensoussan {\em et. al.}~\cite{Bensoussan2015} derived a global maximum principle for both open-loop and closed-loop Stackelberg differential games. Xu {\em et. al.}~\cite{2016Sufficient} researched the Stackelberg strategy for the time-delay Stackelberg game in discrete-time dynamic setting and presented a necessary and sufficient condition. Mukaidani {\em et. al.}~\cite{2017Infinite} investigated the infinite horizon linear quadratic discrete-time Stackelberg game problem with multiple decision makers and derived the necessary conditions for the existence of the optimal strategy set.  Moon {\em et. al.} ~\cite{2015Linear} considered a class of stochastic differential games with the Stackelberg mode of play, with one leader and $N$ uniform followers and designed an approximate Stackelberg equilibrium for the games with sufficiently many finite agents. Lin~{\em et. al.}~\cite{2019An}~concerned an open-loop linear quadratic Stackelberg game of the mean-field stochastic systems, and shown that the open-loop Stackelberg equilibrium admits a feedback representation involving a new state and its mean.

Recently, Shi {\em et. al.}~\cite{ShiLeader} investigated the stochastic differential Stackelberg game for nonlinear systems under asymmetric information, i.e., the leader and the follower implement control strategies using different information patterns. By Applying the Girsanov's theorem, the maximum principle is derived for this problem and solutions for control strategies are given.  In this paper, we aim to develop more explicit solutions for LQG Stackelberg game under nested observation information pattern. This pattern implies that the information available to the leader contains the information available to the follower. In our model, the leader is able to access two linear noisy observation data and the follower is only able to access one of them.

The difference between our work and the one in Shi {\em et. al.}~\cite{ShiLeader} is mainly on the following aspect. The control strategy processes of their work are adapted to the filtration generated by two Wiener processes, i.e., they consider the case of the non-anticipative information pattern. In contrast, we deal with the noisy information pattern~\cite{Charalambos2017Noisy,Charalambos2017Centralized,Charalambos2017application}, i.e., two decision makers implement strategies relying on two observation processes, respectively. This leads to our estimators being different from ones in~\cite{ShiLeader}. In theirs situation, two decision makers' estimators are obtained directly by taking conditional expectation with respect to the $\sigma$-sub-algebras generated by two Wiener processes stated above respectively. In ours case, it appears an interesting problem, i.e., if the related estimators are Kalman-Bucy type or not. In Castanon {\em et. al.}~\cite{Castanon1975}, the follower uses its observation and the leader's strategy to design its own strategy, whereas the leader implements its strategy using global observation data, thus the information set of the follower to implement strategy contains the observation itself and a subset of both decision makers' observation data. Comparing with \cite{Castanon1975}, the follower in our model uses only the observation data to implement its strategy. In this setting, the separation principle is no longer valid~\cite{Wonham1968}. Different from the method in~\cite{ShiLeader}, we take the dynamic system equation and the observation equation as equality constraints to overcome this difficulty.  A comparison between this work and the previous one \cite{Li2020} is as follows:
In both works the follower implements its strategy relying on its observation data, and this data is also available to the leader for implementing its strategy. In \cite{Li2020} the leader also knows the system state information. In this work we generalize this setup by making available to the leader only its local observation data. That is, the leader is no longer able to access the system state information. In this sense, the setup studied in this work is a generalization of the one studied in \cite{Li2020}.
In addition to the above point, the technical results derived in the current paper are more complete than those in \cite{Li2020}. In this work we provide conditions to guarantee existence of the solution, and give sufficient and necessary condition for the optimal open-loop solution. In contrast with the current paper, we only discuss the necessary condition for the optimal open-loop solution. Neither the existence condition and the sufficient condition for the optimal open-loop solution nor the existence and uniqueness analysis for the solution of the related FB-SDE are given in the previous paper. Finally, technically speaking, the different setup studied in this work leads to a FB-SDE which is different from the one in \cite{Li2020}. More precisely, it contains two kind of conditional expectation terms in its drift, rather than only one kind. As a result, the layered analysis is technically more complex, because two kind of (non-independent) innovation processes appear.

With respect to this class of  FB-SDEs and their solving methods, as we know, there are  few reports. We give a layered calculation method to solve it. Noticing that there are two kind of conditional mean terms of of adjoint states, we then seek the functional relation between these two terms and their corresponding state variables respectively. This procedure is done by a layered way. We first look for the functional relation for the adjoint pair under the $\sigma$-sub-algebra generated by follower's observation information in the inner layer part; then we look for the functional relation for the adjoint pair under the $\sigma$-sub-algebra generated by leader's observation information in the outer layer part.

The main innovations are the following.
\begin{itemize}
  \item Show that the solution of the LQG Stackelberg game problem under nested observation information pattern studied requires solving a new type of FB-SDEs.
  \item Propose a new layered calculation method to find the feedback type strategies for both decision makers.
\end{itemize}

The rest of the paper is organized as follows. In Section~\uppercase\expandafter{\romannumeral2}, the LQG Stackelberg game under nested information patterns is formulated. In Section~\uppercase\expandafter{\romannumeral3}, the sufficient and necessary conditions for the follower and the leader to be optimal are given respectively. In addition, the existence and uniqueness property for the solution of the FB-SDE appeared in leader's problem is derived. In section~\uppercase\expandafter{\romannumeral4}, we deal with the computation aspect and obtain the feedback type strategies. In section~\uppercase\expandafter{\romannumeral5}, we give some examples to show its application of our main result.

\textbf{Notation:}
\emph{We use $M^{'}$ to denote the transpose of matrix or vector $M$ and use  $I_{id}$ to denote the identity matrix. $\mathrm{diag}(A_{11},..,A_{nn})$ denotes the block diagonal matrix whose diagonal entries are $A_{11},..,A_{nn}$, and $ \mathrm{col}(A_{11};..;A_{n1}):=(A_{11}^{'},..,A_{n1}^{'})^{'}$. We also use  $S\succ 0$ and $S\succeq 0$ to denote the symmetric matrix $S$ to be positive definite  and positive semi-definite respectively.
$\langle \cdot,\cdot\rangle$ and $|\cdot|$ denote
the inner product and the norm in the Euclidean space, respectively.  For a given complete probability space $(\Omega, \mathcal{F}, \mathbb{P})$,
$\mathbb{E}[\cdot]$ and $\mathrm{cov}(\cdot,\cdot)$ denote the expectation and covariance operators defined by the probability measure $\mathbb{P}$ respectively.}

\section{Problem Formulation}

\subsection{Model Description}
We study the linear quadratic Stackelberg game described by
\begin{equation}\label{eq2.1a}
\begin{aligned}
\mathcal{J}^{i}(U^{L},U^{F})&=\mathbb{E}\int_{0}^{T}\frac{1}{2}\Big(\langle Q_{i}X_{t}, X_{t}\rangle
+\langle R_{iL}U_{t}^{L}, U_{t}^{L}\rangle\\
&+\langle R_{iF}U_{t}^{F}, U_{t}^{F}\rangle\Big) dt
+\mathbb{E}\frac{1}{2}\langle G_{i} X_{T}, X_{T}\rangle,
\end{aligned}
\end{equation}
subject to the signal model
\begin{equation}\label{eq2.1b}
\begin{aligned}
\begin{cases}
dX_{t}=(AX_{t}+B_{F}U_{t}^{F}+B_{L}U_{t}^{L})dt+D dW_{t},\\
dZ_{t}^{1}=H_{1}X_{t}dt+dV_{t}^{1},\\
dZ_{t}^{2}=H_{2}X_{t}dt+dV_{t}^{2},
\end{cases}
\end{aligned}
\end{equation}
where $i=F,L$. $X_t\in \mathbb{R}^{n}$ is the state of the linear time-invariant system, $Z_{t}^{1}\in \mathbb{R}^{l}$ and $Z_{t}^{2}\in\mathbb{R}^{m}$ are the linear noisy observations at time $t$, initialized by $Z_{0}^{1}=0$ and $Z_{0}^{2}=0$ respectively. Between them, the follower is able to access to the observation process $\{Z^{2}_{t}\}$, whereas the leader is able to access to both $\{Z^{1}_{t}\}$ and $\{Z^{2}_{t}\}$. $U_t^{F}\in \mathbb{U}^{F}\subset\mathbb{R}^{k}$ and $U_t^{L}\in \mathbb{U}^{L}\subset\mathbb{R}^{d}$ are the strategies of two decision makers, respectively, where $\mathbb{U}^{F}$ and $\mathbb{U}^{L}$ are convex and compact. $(W^{'},V^{1'},V^{2'})^{'}\in \mathbb{R}^{w}$ is the standard $\{\mathcal{F}_{t}\}_{t\in[0,T]}$-adapted Wiener process on the filtered complete probability space $(\Omega, \mathcal{F}, \{\mathcal{F}_{t}\}_{t\in[0,T]}, \mathbb{P})$. The initial value $X_{0}=\xi$ is a Gaussian random variable which is independent of the standard Wiener process above. $A, D, H_{1}, H_{2}$, $B_{i}$ are constant matrices with adequate dimensions respectively; Weights matrices $Q_{i}$, $R_{ij}$, $G_{i}$, $Q_{i}$, $R_{ii}$ are symmetric matrices, with $i,j=F,L$.  The detail conditions on them are listed in Assumption~\ref{ass}.

The two information patterns available to the decision makers $L$ and $F$ are denoted by two filtrations $\{\mathcal{I}_{t}^{L}\}_{t\in[0,T]}$ and $\{\mathcal{I}_{t}^{F}\}_{t\in[0,T]}$ respectively. In the above, for arbitrary $t\in [0,T]$, $\mathcal{I}_{t}^{L}$ and $\mathcal{I}_{t}^{F}$ are $\sigma$-sub-algebras of $\mathcal{F}_{t}$.

The admissible strategy sets of the follower and the leader are defined, respectively, by
\begin{align*}
\mathcal{U}^{F}&:=\Big\{U^{F}~|~U^{F}_{t}~\mathrm{is~} \mathbb{U}^{F}-\mathrm{valued}~\mathrm{and}~
\{\mathcal{I}_{t}^{F}\}-\mathrm{adapted~}\\
  &~~~~\mathrm{process}~\mathrm{~with~}
\mathbb{E}\int_{0}^{T}\left|U^{F}_{t}\right|^{2}dt<\infty\Big\},\\
\mathcal{U}^{L}&:=\Big\{U^{L}~|~U^{L}_{t}~\mathrm{is~}
\mathbb{U}^{L}-\mathrm{valued}~\mathrm{and}~
\{\mathcal{I}_{t}^{L}\}-\mathrm{adapted~}\\
  &~~~~\mathrm{process}~\mathrm{~with~}
\mathbb{E}\int_{0}^{T}\left|U^{L}_{t}\right|^{2}dt<\infty\Big\}.
\end{align*}

The open-loop LQG Stackelberg game under asymmetric information pattern is stated as follows:
\emph{Determine the control functions $U^{F\star}\in \mathcal{U}^{F}$ and $U^{L\star}\in \mathcal{U}^{L}$ such that $U^{F\star}=\eta\left( U^{L\star}\right)$, with $\eta$ being the reaction function in the sense that the follower responses to the leader's strategy.
$U^{L\star}$ and the mapping
 $\eta : \mathcal{U}^{L}\rightarrow \mathcal{U}^{F}$ satisfy the following conditions:}
\begin{equation}\label{eq2.2}
\begin{aligned}
&(Follower's ~Optimality~ Condition)\\
&\mathcal{J}^{F}\left(U^{L},\eta \left( U^{L}\right)\right)\leq \mathcal{J}^{F}\left(U^{L},U^{F}\right),~~\forall~U^{L}\in \mathcal{U}^{L},\\
&~~~~~~~~~~~~~~~~~~~~~~~~~~~~~~~~~~~~~~~~~~U^{F}\in \mathcal{U}^{F},\\
&(Leader's~ Optimality ~Condition)\\
&\mathcal{J}^{L}(U^{L\star},\eta \left( U^{L\star}\right))\leq \mathcal{J}^{L}(U^{L}, \eta \left( U^{L}\right)),~~\forall~U^{L}\in\mathcal{U}^{L}.
\end{aligned}
\end{equation}

\begin{ass}\label{ass}
Suppose the weight matrices satisfy the following conditions:
\begin{description}
  \item[\textbf{A1a}.] $Q_{F}\succeq 0,~~G_{F}\succeq0,~~R_{FF}\succ0$,\par
  \item[\textbf{A1b}.] $Q_{L}\succeq 0,~~G_{L}\succeq0,~~R_{LL}\succ0,~~R_{LF}\succ 0$.
\end{description}

\end{ass}

With regard to (\ref{eq2.1b}), we consider an additive noise linear system rather than a one with multiplicative noise. As is well known, the filter of the latter case is nonlinear.

\subsection{Optimization Problem}

As stated above, the follower is able to access to the observation process $\{Z^{2}_{t}\}$, whereas the leader is able to access to both $\{Z^{1}_{t}\}$ and $\{Z^{2}_{t}\}$. We suppose the follower and the leader depend respectively on their respective information available to implement their strategies at time $t$, $t\in[0,T]$. To describe it, let $\mathcal{I}_{t}^{L}=\mathcal{Z}_{t}^{L}$, $\mathcal{I}_{t}^{F}=\mathcal{Z}_{t}^{F}$.  They are generated by their respective information sets, for detail
\begin{align*}
\mathcal{Z}_{t}^{L}:&=\sigma (Z_{s}^{1},Z_{s}^{2}~|~0\leq s\leq t),\\
\mathcal{Z}_{t}^{F}:&=\sigma (Z_{s}^{2}~|~0\leq s\leq t), ~~t\in[0,T].
\end{align*}
Notice that, for each $t\in[0,T]$, $\mathcal{Z}_{t}^{F}\subset \mathcal{Z}_{t}^{L}$, implying that the $\sigma$-algebras are nested \cite{Castanon1975}. For other works in linear quadratic control under nested information pattern  we refer the reader to \cite{2020Nayyar}.

\textbf{The Follower's Problem}~
Based on the \textit{Follower's Optimality Condition}
(\ref{eq2.2}), given the leader's strategy $U^{L}\in \mathcal{U}^{L}$, the follower is faced with the following optimal control problem:
\begin{equation}\label{eq3.1}
\begin{aligned}
\inf_{U^{F}\in \mathcal{U}^{F}}\mathcal{J}^{F}(U^{L},U^{F}),
\end{aligned}
\end{equation}
subject to
\begin{equation}\label{eq3.2}
\begin{aligned}
\begin{cases}
dX_{t}=(AX_{t}+B_{F}U_{t}^{F}+B_{L}U_{t}^{L})dt+D dW_{t},\\
dZ_{t}^{2}=H_{2}X_{t}dt+dV_{t}^{2},\\
~~X_{0}=\xi,~~Z_{0}^{2}=0.
\end{cases}
\end{aligned}
\end{equation}

\textbf{The Leader's Problem}
~Based on the \textit{Leader's Optimality Condition} of
(\ref{eq2.2}), after substituting $U^{F\star}$ obtained from solving the follower's problem into (\ref{eq3.2}), we get the optimization problem below faced by the leader:
\begin{equation}\label{eq3.5}
\begin{aligned}
\inf_{U^{L}\in \mathcal{U}^{L}}\mathcal{J}^{L}(U^{L},U^{F\star}),
\end{aligned}
\end{equation}
subject to
\begin{equation}\label{eq3.6}
\begin{aligned}
\begin{cases}
dX_{t}=\left(AX_{t}+B_{F}U^{F\star}+B_{L}U_{t}^{L}\right)dt+D dW_{t},\\
dZ_{t}^{1}=H_{1}X_{t}dt+dV_{t}^{1},\\
dZ_{t}^{2}=H_{2}X_{t}dt+dV_{t}^{2},\\
~~X_{0}=\xi,~~Z_{0}^{1}=0,~~Z_{0}^{2}=0.
\end{cases}
\end{aligned}
\end{equation}

\section{Optimality Conditions}

For the follower's problem, for every $U^{L}\in \mathcal{U}^{L}$, it is a classic LQG regulation problem under assumption \textbf{A1a}. Thus, the optimization problem of the follower's is well-posed and  has a unique solution~\cite{Ahmed1998Linear,Bensoussan1992Stochastic}.
\begin{lem}\label{lem1}
Let assumption \textbf{A1a} be satisfied. Then, a
necessary and sufficient condition for $U^{F}$ to be an open-loop optimal strategy of the follower's problem described by (\ref{eq3.1})-(\ref{eq3.2}) is
\begin{equation}\label{eq3.3}
\begin{aligned}
U^{F\star}_{t}=-R_{FF}^{-1}B^{'}_{F}\hat{p}_{t}^{F},~~~~t\in[0,T],
\end{aligned}
\end{equation}
with $\hat{p}_{t}^{F}:=\mathbb{E}\left[p_{t}^{F}|\mathcal{Z}_{t}^{F}\right]$. Here, $\left(p_{t}^{F},\left(q^{F}_{t}, r^{F}_{t}\right)\right)$ (called the adjoint process) is the unique solution of the BSDE (conventionally called the adjoint equation):
\begin{equation}\label{eq3.4}
\begin{aligned}
\begin{cases}
dp^{F}_{t}=-\Big[A^{'}p^{F}_{t}+Q_{F}X_{t}\Big]dt+q^{F}_{t}dW_{t}
+r^{F}_{t}dV_{t}^{2},\\
~~p^{F}_{T}=G_{F}X_{T}.
\end{cases}
\end{aligned}
\end{equation}
\end{lem}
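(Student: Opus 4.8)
The plan is to derive the stated optimality condition via the classical stochastic maximum principle, specializing the general necessary-and-sufficient characterization for convex LQG problems to the follower's setup. The follower faces a standard stochastic control problem (\ref{eq3.1})--(\ref{eq3.2}) with the leader's strategy $U^{L}$ fixed as an exogenous input. Since Assumption \textbf{A1a} guarantees $R_{FF}\succ 0$ together with $Q_{F},G_{F}\succeq 0$, the cost functional $\mathcal{J}^{F}$ is strictly convex in $U^{F}$ and the problem is well-posed with a unique minimizer, so that the first-order condition will be both necessary and sufficient.

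\emph{First I would} form the Hamiltonian associated with (\ref{eq3.1})--(\ref{eq3.2}). Writing $H^{F}(X,U^{F},p,q,r)=\langle p, AX+B_{F}U^{F}+B_{L}U^{L}\rangle+\langle q,D\rangle+\langle r,H_{2}X\rangle+\tfrac12\langle Q_{F}X,X\rangle+\tfrac12\langle R_{FF}U^{F},U^{F}\rangle+\tfrac12\langle R_{FL}U^{L},U^{L}\rangle$, the adjoint equation is obtained as the BSDE $dp^{F}_{t}=-\partial_{X}H^{F}\,dt+q^{F}_{t}dW_{t}+r^{F}_{t}dV^{2}_{t}$ with terminal condition $p^{F}_{T}=G_{F}X_{T}$; since the diffusion coefficients $D$ and $H_{2}$ in (\ref{eq3.2}) do not depend on $X$ or $U^{F}$, the drift reduces to $-[A'p^{F}_{t}+Q_{F}X_{t}]$, which is exactly (\ref{eq3.4}). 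Existence and uniqueness of the adapted triple $(p^{F},q^{F},r^{F})$ follow from standard linear BSDE theory, since the coefficients are linear and $X_{t}$ has finite second moment.

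\emph{Next I would} perform the variational (convex perturbation) argument: for an admissible perturbation $U^{F}+\epsilon(V^{F}-U^{F})$ with $V^{F}\in\mathcal{U}^{F}$, I would compute the Gateaux derivative of $\mathcal{J}^{F}$, apply Itô's formula to $\langle p^{F}_{t},\delta X_{t}\rangle$ where $\delta X$ solves the linearized state equation, and use the adjoint dynamics to cancel the state-dependent terms. This yields the stationarity condition $\mathbb{E}\int_{0}^{T}\langle R_{FF}U^{F}_{t}+B'_{F}p^{F}_{t},\,V^{F}_{t}-U^{F}_{t}\rangle\,dt\geq 0$ for all admissible $V^{F}$. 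The crucial measurability step — and \textbf{the main obstacle} — is that $U^{F}_{t}$ must be $\{\mathcal{Z}^{F}_{t}\}$-adapted, whereas $p^{F}_{t}$ generally is not. I would therefore condition on $\mathcal{Z}^{F}_{t}$: since $V^{F}_{t}-U^{F}_{t}$ is $\mathcal{Z}^{F}_{t}$-measurable, the tower property replaces $p^{F}_{t}$ by its projection $\hat{p}^{F}_{t}=\mathbb{E}[p^{F}_{t}\mid\mathcal{Z}^{F}_{t}]$ inside the expectation, giving $\langle R_{FF}U^{F}_{t}+B'_{F}\hat{p}^{F}_{t},\,V^{F}_{t}-U^{F}_{t}\rangle\geq 0$ pointwise.

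\emph{Finally}, because $\mathbb{U}^{F}$ is convex and the minimizer lies in its interior for the unconstrained LQG regulator (or by the standard variational inequality for convex constraint sets), the stationarity condition forces the bracketed term to vanish, yielding $R_{FF}U^{F\star}_{t}+B'_{F}\hat{p}^{F}_{t}=0$, and inverting $R_{FF}$ produces (\ref{eq3.3}). I expect the genuine subtlety to reside entirely in the conditioning step that converts the non-adapted $p^{F}_{t}$ into its filtered version $\hat{p}^{F}_{t}$; this is the precise point where the noisy observation information pattern departs from the full-information maximum principle and where the separation-type structure enters.
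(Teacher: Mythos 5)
Your overall strategy --- convex perturbation, G\^{a}teaux derivative, duality via It\^{o}'s formula on $\langle p^{F}_{t},\delta X_{t}\rangle$, and then the tower property to replace $p^{F}_{t}$ by $\hat{p}^{F}_{t}=\mathbb{E}[p^{F}_{t}\,|\,\mathcal{Z}^{F}_{t}]$ --- is the route the paper takes, and you correctly locate the essential step in the conditioning argument. However, your derivation of the adjoint equation is internally inconsistent. The Hamiltonian term $\langle r,H_{2}X\rangle$ has $X$-gradient $H_{2}'r$, so $-\partial_{X}H^{F}$ would be $-[A'p+H_{2}'r+Q_{F}X]$, not the drift in (\ref{eq3.4}); the justification that ``the diffusion coefficients $D$ and $H_{2}$ do not depend on $X$'' is false for $H_{2}X$. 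In fact that term does not belong in the Hamiltonian at all: $H_{2}X$ is the \emph{drift} of the observation process, not a diffusion coefficient of $X$, and the $r^{F}_{t}dV^{2}_{t}$ term in (\ref{eq3.4}) arises purely from martingale representation. Your stated adjoint BSDE is the correct one and the subsequent variational argument never actually uses the Hamiltonian, so this is a repairable local slip rather than a fatal one, but as written (\ref{eq3.4}) does not follow from your setup.

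The other substantive difference is that the paper carries the observation equation $dZ^{2}_{t}=H_{2}X_{t}dt+dV^{2}_{t}$ as an explicit equality constraint: it augments the state with $Z^{2}$, applies It\^{o}'s formula to $\langle p^{F}_{t},\delta X_{t}\rangle+\langle \underline{p}^{F}_{t},\delta Z^{2}_{t}\rangle$ with a second adjoint process satisfying $\underline{p}^{F}_{T}=0$, and shows $\underline{p}^{F}\equiv 0$ by uniqueness for linear BSDEs. This is the paper's announced device for coping with the fact that perturbing $U^{F}$ perturbs $Z^{2}$, and hence the information pattern itself, which is exactly why the separation principle fails here. Your sketch applies It\^{o}'s formula only to $\langle p^{F}_{t},\delta X_{t}\rangle$ and silently drops $\delta Z^{2}_{t}$; the end result coincides because the extra adjoint vanishes, but you should either carry the observation constraint explicitly or argue why $\delta Z^{2}$ may be ignored. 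Finally, note that $\mathbb{U}^{F}$ is only assumed convex and compact, so passing from the variational inequality to the equality $R_{FF}U^{F\star}_{t}+B_{F}'\hat{p}^{F}_{t}=0$ tacitly assumes an interior minimizer --- a gloss the paper shares with you.
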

Notice that there appears a adjoint process when solve the follower's problem, a similar situation may appear later in leader's problem, see (\ref{eq3.9}). Its proof is similar to the work in \cite{Charalambos2017Noisy}.  For easy of reading, we give its proof in Appendix.

\bigskip

After substituting $U^{F\star}$ in (\ref{eq3.3}) into (\ref{eq3.2}), the optimization problem  faced by the leader changes to:
\begin{equation}\label{eq3.5}
\begin{aligned}
\inf_{U^{L}\in \mathcal{U}^{L}}\mathcal{J}^{L}(U^{L},U^{F\star}),
\end{aligned}
\end{equation}
subject to
\begin{equation}\label{eq3.6}
\begin{aligned}
\begin{cases}
dX_{t}=\left(AX_{t}-B_{F}R_{F}^{-1}B_{F}^{'}\hat{p}_{t}^{F}+B_{L}U_{t}^{L}\right)dt\\
~~~~~~~~~~~+D dW_{t},\\
dZ_{t}^{1}=H_{1}X_{t}dt+dV_{t}^{1},\\
dZ_{t}^{2}=H_{2}X_{t}dt+dV_{t}^{2},\\
dp^{F}_{t}=-\left[A^{'}p^{F}_{t}+Q_{F}X_{t}\right]dt+q^{F}_{t}dW_{t}\\
~~~~~~~~~~~+r^{F}_{t}dV_{t}^{2}.\\
~~X_{0}=\xi,~~Z_{0}^{1}=0,~~Z_{0}^{2}=0;~~
p^{F}_{T}=G_{F}X_{T}.
\end{cases}
\end{aligned}
\end{equation}

Notice that the existence and the uniqueness of the follower's strategy implies that, for every $U^{L}\in \mathcal{U}^{L}$, there exists a solution for (\ref{eq3.6}). Suppose $(X^1, (p^{F1}, q^{F1}, r^{F1}))$ and $(X^2, (p^{F2}, q^{F2}, r^{F2}))$ are two different solutions of (\ref{eq3.6}). Let
\begin{equation}\label{error}
\begin{aligned}
        &\phi_{t}(X):=X_{t}^{1}-X_{t}^{2}, ~~\phi_{t}(p^{F}):=p_{t}^{F1}-p_{t}^{F2},\\
&\phi_{t}(q^{F}):=q^{F1}_{t}-q^{F2}_{t}, ~~\phi_{t}(r^{F}):=r^{F1}_{t}-r^{F2}_{t}.
      \end{aligned}
\end{equation}
Suppose assumption \textbf{A1a} holds. From an argument similar to the one in \cite{Tang2003}, it follows that
\begin{equation}\label{ineq2}
\begin{aligned}
&\mathbb{E}\sup_{t\in[0,T]}\left|\phi_{t}(X)\right|^{2}+\mathbb{E}\sup_{t\in[0,T]}
\left|\phi_{t}(p^{F})\right|^{2}\\
&~~+\mathbb{E}\int_{0}^{T}\left[\left|\phi_{t}(q^{F})\right|^{2}
+\left|\phi_{t}(r^{F})\right|^{2}\right]dt\\
&\leq K\left(\mathbb{E}\left|\phi_{0}(X)\right|^{2}+\mathbb{E}\int_{0}^{T}
\left|\phi_{t}(U^{L})\right|^{2}dt\right),
\end{aligned}
\end{equation}
where  $K$ is a positive constant and $\phi_{t}(U^{L}):=U_{t}^{L1}-U_{t}^{L2}$ denotes the error of two different strategies of the leader.

Notice that (\ref{ineq2}) implies all left terms can be dominated by initial data error and the energy of the strategy process error. Based on this, uniqueness of solution of (\ref{eq3.6}) is derived.
From now on, we have got, for every $U^{L}\in \mathcal{U}^{L}$, that the FB-SDE in (\ref{eq3.6}) has a unique solution. To proceed, we need a lemma.

\begin{lem}
Let assumptions \textbf{A1a}, \textbf{A1b} be satisfied. Then the leader's problem has an optimal solution.
\end{lem}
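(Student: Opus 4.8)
The plan is to establish existence by the direct method of the calculus of variations, exploiting the convex–quadratic structure of $\mathcal{J}^{L}$ together with the well-posedness of the constraint (\ref{eq3.6}) already obtained through the stability estimate (\ref{ineq2}). First I would regard the admissible set $\mathcal{U}^{L}$ as a subset of the Hilbert space $\mathcal{H}:=L^{2}_{\mathcal{I}^{L}}([0,T];\mathbb{R}^{d})$ of square-integrable $\{\mathcal{I}_{t}^{L}\}$-adapted processes. Since $\mathbb{U}^{L}$ is convex and compact and adaptedness is a closed linear constraint, $\mathcal{U}^{L}$ is a closed, convex, and bounded subset of $\mathcal{H}$, hence weakly sequentially compact.

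Next I would record the affine and continuous dependence of the state and adjoint on the control. By the unique solvability of (\ref{eq3.6}), the map $U^{L}\mapsto (X,p^{F})$ is well defined; because (\ref{eq3.6}) is linear in $(X,p^{F},q^{F},r^{F})$ and in $U^{L}$, with $\hat{p}_{t}^{F}=\mathbb{E}[p_{t}^{F}\,|\,\mathcal{Z}_{t}^{F}]$ a linear operation and the noise and initial/terminal data fixed, this map is affine. The estimate (\ref{ineq2}) shows it is Lipschitz continuous from $\mathcal{H}$ into the solution spaces, the errors $\phi_{t}(X)$ and $\phi_{t}(p^{F})$ being dominated by $\phi_{t}(U^{L})$. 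Since $U^{F\star}=-R_{FF}^{-1}B_{F}^{'}\hat{p}^{F}$ is then affine and continuous in $U^{L}$, the cost $\mathcal{J}^{L}(U^{L},U^{F\star})$ is a continuous quadratic functional of $U^{L}$, obtained by composing the nonnegative quadratic form determined by $Q_{L},R_{LL},R_{LF},G_{L}$ with an affine map.

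I would then verify convexity and coercivity from Assumption \textbf{A1b}. As all the weights are positive semidefinite and $R_{LL}\succ 0$, every summand of $\mathcal{J}^{L}$ is nonnegative, so
\begin{equation*}
\mathcal{J}^{L}(U^{L},U^{F\star})\ \ge\ \frac{1}{2}\,\lambda_{\min}(R_{LL})\,\mathbb{E}\!\int_{0}^{T}|U_{t}^{L}|^{2}\,dt,
\end{equation*}
which bounds the functional below; meanwhile the composition of a convex quadratic form with an affine map is convex. A convex functional that is strongly continuous on a Hilbert space is weakly lower semicontinuous. Taking a minimizing sequence $\{U^{L,n}\}\subset\mathcal{U}^{L}$, weak sequential compactness gives a subsequence converging weakly to some $U^{L,\star}\in\mathcal{U}^{L}$, and weak lower semicontinuity yields $\mathcal{J}^{L}(U^{L,\star},\cdot)\le\liminf_{n}\mathcal{J}^{L}(U^{L,n},\cdot)=\inf_{U^{L}\in\mathcal{U}^{L}}\mathcal{J}^{L}$, so that $U^{L,\star}$ is optimal; strict convexity from $R_{LL}\succ 0$ additionally forces uniqueness, although only existence is asserted.

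The main obstacle I expect is the weak lower semicontinuity step, which rests entirely on establishing that the control-to-state-and-adjoint map is genuinely affine and strongly continuous despite the conditional-expectation term $\hat{p}^{F}$ that couples the forward equation for $X$ to the backward equation for $p^{F}$. Making this rigorous is where the stability estimate (\ref{ineq2}) is indispensable: it converts the abstract unique solvability of the coupled FB-SDE into the quantitative Lipschitz control of $(X,p^{F})$ by $U^{L}$ needed to pass from strong continuity of $\mathcal{J}^{L}$ to its weak lower semicontinuity along the minimizing sequence.
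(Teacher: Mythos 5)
Your argument is correct and follows essentially the same route as the paper: both take a minimizing sequence, extract a weakly convergent subsequence using the boundedness of $\mathbb{U}^{L}$, rely on the stability estimate (\ref{ineq2}) for continuity of the control-to-state map, and conclude by convexity of the quadratic cost together with closedness and convexity of $\mathcal{U}^{L}$. The only cosmetic difference is that the paper makes the final step explicit via Mazur's theorem (strong convergence of convex combinations of the $U^{L}_{\cdot,k}$), whereas you invoke the equivalent abstract fact that a convex, strongly continuous functional on a Hilbert space is weakly lower semicontinuous.
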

\begin{proof}
Let $\left(X_{\cdot,n}, p^{F}_{\cdot,n}, q^{F}_{\cdot,n}, r^{F}_{\cdot,n}, U^{L}_{\cdot,n}\right)$ be a minimizing sequence, i.e., $\mathcal{J}^{L}(U^{L}_{n},U^{F\star})\rightarrow\inf_{U^{L}\in \mathcal{U}^{L}}\mathcal{J}^{L}(U^{L},U^{F\star})$ as $n\rightarrow +\infty$, where $\left(X_{\cdot,n}, p^{F}_{\cdot,n}, q^{F}_{\cdot,n}, r^{F}_{\cdot,n}\right)$ denotes the state processes of (\ref{eq3.6}) corresponding with the strategy process $U^{L}_{\cdot,n}$ for every $n\in \mathbb{N}$. From the boundness of $\mathbb{U}^{L}$, we have
\begin{align*}
\mathbb{E}\int_{0}^{T}\left|U^{L}_{t,n}\right|^{2}dt\leq K,~~~~\forall~n\in \mathbb{N},
\end{align*}
with $K$ being a positive constant.  Thus, there is a subsequence $\{U^{L}_{\cdot,k}\}$ such that
$U^{L}_{\cdot,k}$ converges weakly to $\overline{U}^{L}_{\cdot}$ as $k\rightarrow +\infty$ in the square square-integrable space containing all $\{\mathcal{Z}_{t}^{L}\}$-adapted $\mathbb{R}^{d}$-valued random processes.

By Mazur theorem~\cite{Haim2011}, there exists a
sequence $\widetilde{U}^{L}_{\cdot,n}$ made up of convex combinations of the $U^{L}_{\cdot,k}$'s that converges strongly to $\overline{U}^{L}_{\cdot}$:
\begin{align*}
\widetilde{U}^{L}_{\cdot,n}=\sum_{k\geq 1}a_{nk}U^{L}_{\cdot,k},~~a_{nk}\geq 0,~~\sum_{k\geq 1}a_{nk}=1.
\end{align*}
Since the subset $\mathbb{U}^{L}\subset \mathbb{R}^{d}$ is convex and closed, it follows that
$\overline{U}^{L} \in \mathcal{U}^{L}$. Let $\left(\widetilde{X}_{\cdot,n}, \widetilde{p}^{F}_{\cdot,n}, \widetilde{q}^{F}_{\cdot,n}, \widetilde{r}^{F}_{\cdot,n}\right)$ and $\left(\overline{X}_{\cdot}, \overline{p}^{F}_{\cdot}, \overline{q}^{F}_{\cdot}, \overline{r}^{F}_{\cdot}\right)$ be the states under the strategies $\widetilde{U}^{L}_{\cdot,n}$ and $\overline{U}^{L}_{\cdot}$ respectively, then it follows, from (\ref{ineq2}),
that
\begin{align*}
&\mathbb{E}\sup_{t\in[0,T]}\left|\widetilde{X}_{t,n}-\overline{X}_{t}\right|^{2}=0,\\ &\mathbb{E}\sup_{t\in[0,T]}\left|\widetilde{p}^{F}_{t,n}-\overline{p}^{F}_{t}\right|^{2}=0,~~n\rightarrow +\infty.
\end{align*}

Finally, from the convexity of the quadratic performance index of (\ref{eq3.5}) with their respective variables, we have
\begin{align*}
\mathcal{J}^{L}(\overline{U}^{L},&-R_{FF}^{-1}B_{F}^{'}\hat{\overline{p}}_{t}^{F})
=\lim_{n\rightarrow+\infty}\mathcal{J}^{L}(\widetilde{U}^{L}_{n},
-R_{FF}^{-1}B_{F}^{'}\hat{\widetilde{p}}_{t,n}^{F})\\
&\leq \lim_{n\rightarrow+\infty}\sum_{k\geq 1}a_{nk}\mathcal{J}^{L}(U^{L}_{k},
-R_{FF}^{-1}B_{F}^{'}\hat{p}_{t,k}^{F})\\
&=\inf_{U^{L}\in \mathcal{U}^{L}}\mathcal{J}^{L}(U^{L},U^{F\star}).
\end{align*}
Hence, $\overline{U}^{L}$ is optimal.

\end{proof}

\begin{rem}
This lemma develops the result of stochastic LQ problems in \cite{Yong1999}, pp.68. 
With regard to the uniqueness of the optimal control, we show later the optimal control is computed by a FB-SDE. Under our assumption, we show the FB-SDE admits a unique solution. Therefore, the optimal control is unique.
\end{rem}

Next, we introduce a new FB-SDE (\ref{eq3.9}).  This FB-SDE is needed for the proof of Lemma~\ref{lem2}.  We write it here in advance and discuss the existence and  uniqueness property of its solution to avoid the proof of Lemma~\ref{lem2} being too long. Comparing with (\ref{eq3.6}), there are two additional equations appear.
They are the equations satisfied by two adjoint state processes, $(p^{L}, k^{L}, r^{L})$ and $Y$, of the original state processes $X$ and $(p^{F},q^{F},r^{F})$ respectively.

\begin{equation}\label{eq3.9}
\begin{aligned}
\begin{cases}
dp^{L}_{t}=-\left[A^{'}p^{L}_{t}+Q_{F}Y_{t}+Q_{L}X_{t}\right]dt+q^{L}_{t}dW_{t}\\
~~~~~~~~~~~+k^{L}_{t}dV_{t}^{1}+r_{t}^{L}dV_{t}^{2},\\
dp^{F}_{t}=-\left[A^{'}p^{F}_{t}+Q_{F}X_{t}\right]dt+q^{F}_{t}dW_{t}\\
~~~~~~~~~~~+r^{F}_{t}dV_{t}^{2},\\
~dX_{t}=\left(AX_{t}+B_{L}U_{t}^{L\star}-B_{F}R_{FF}^{-1}B_{F}^{'}\hat{p}_{t}^{F}\right)dt\\
~~~~~~~~~~~+D dW_{t},\\
U_{t}^{L\star}=-R_{L}^{-1}B_{L}^{'}\tilde{p}^{L}_{t},~~~~t\in [0,T],\\
~dY_{t}=\Big(AY_{t}-B_{F}R_{FF}^{-1}B_{F}^{'}\hat{p}^{L}_{t}\\
~~~~~~~~~~~~+B_{F}R_{FF}^{-1}R_{LF}
R_{FF}^{-1}B_{F}^{'}\hat{p}_{t}^{F}\Big)dt,\\
~dZ_{t}^{1}=H_{1}X_{t}dt+dV_{t}^{1},\\
~dZ_{t}^{2}=H_{2}X_{t}dt+dV_{t}^{2},\\
~~~p^{L}_{T}=G_{L}X_{T}+G_{F}Y_{T},~~p^{F}_{T}=G_{F}X_{T}, \\
~~~X_{0}=\xi,~~Y_{0}=0,~~Z_{0}^{1}=0,~~Z_{0}^{2}=0,
\end{cases}
\end{aligned}
\end{equation}
where $\tilde{p}_{t}^{L}:=\mathbb{E}\left[p_{t}^{L}|\mathcal{Z}_{t}^{L}\right]$,  $\hat{p}_{t}^{F}:=\mathbb{E}\left[p_{t}^{F}|\mathcal{Z}_{t}^{F}\right]$ and $\hat{p}_{t}^{L}:=\mathbb{E}\left[p_{t}^{L}|\mathcal{Z}_{t}^{F}\right]$.

\begin{lem}\label{lem0001}
Let assumptions \textbf{A1a}, \textbf{A1b} be satisfied. Then, the FB-SDE (\ref{eq3.9}) has a unique solution.
\end{lem}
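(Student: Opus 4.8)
The plan is to treat (\ref{eq3.9}) as a coupled conditional-mean-field forward--backward SDE and to establish well-posedness in two stages: first uniqueness by a monotonicity/energy identity inherited from the leader's convex cost, then existence by the method of continuation anchored on the a priori estimate (\ref{ineq2}) already available for the subsystem (\ref{eq3.6}). Throughout I would work on the fixed reference filtration generated by the driving Wiener process $(W,V^{1},V^{2})$ and regard the three terms $\hat{p}^{F}_{t}=\mathbb{E}[p^{F}_{t}\mid\mathcal{Z}^{F}_{t}]$, $\hat{p}^{L}_{t}=\mathbb{E}[p^{L}_{t}\mid\mathcal{Z}^{F}_{t}]$ and $\tilde{p}^{L}_{t}=\mathbb{E}[p^{L}_{t}\mid\mathcal{Z}^{L}_{t}]$ as orthogonal $L^{2}$-projections; this is the device that lets the conditional expectations be manipulated by the tower property while keeping the driving filtration fixed.

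For uniqueness, let $(X^{1},\dots)$ and $(X^{2},\dots)$ be two solutions and write their differences as in (\ref{error}), noting that the additive-noise structure makes $\phi_{t}(X)$ and $\phi_{t}(Y)$ carry no diffusion part. I would apply It\^{o}'s formula to the dual pairings $\langle \phi(p^{L}),\phi(X)\rangle$ and $\langle\phi(p^{F}),\phi(Y)\rangle$, integrate over $[0,T]$, and use the terminal couplings $p^{L}_{T}=G_{L}X_{T}+G_{F}Y_{T}$, $p^{F}_{T}=G_{F}X_{T}$ together with the feedback laws $U^{L\star}=-R_{LL}^{-1}B_{L}'\tilde{p}^{L}$ and $U^{F\star}=-R_{FF}^{-1}B_{F}'\hat{p}^{F}$. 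The transport terms $\langle A'\phi(p),\phi(\cdot)\rangle$ cancel in pairs, while each conditional-mean term is reduced by the projection identity $\mathbb{E}\langle \phi(p^{\bullet}),M\,\phi(\hat{p}^{\circ})\rangle=\mathbb{E}\langle \phi(\hat{p}^{\bullet}),M\,\phi(\hat{p}^{\circ})\rangle$ whenever $\phi(\hat{p}^{\circ})$ is measurable with respect to the coarser of the two $\sigma$-algebras. Collecting terms, assumptions \textbf{A1a}--\textbf{A1b} ($Q_{F},Q_{L},G_{F},G_{L}\succeq0$ and $R_{FF},R_{LL},R_{LF}\succ0$) should render the resulting quadratic functional sign-definite, which forces $\phi(U^{L})\equiv0$ and the relevant conditional means to vanish; feeding this back into (\ref{ineq2}), extended to the two additional linear equations for $p^{L}$ and $Y$, then yields $\phi(X)\equiv\phi(Y)\equiv\phi(p^{F})\equiv\phi(p^{L})\equiv0$ together with the vanishing of all martingale integrands, i.e. uniqueness.

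For existence I would use the method of continuation. Introduce a parameter $\alpha\in[0,1]$ scaling the off-diagonal coupling (the terms $Q_{F}Y$, $G_{F}Y_{T}$, and the $\hat{p}^{F},\hat{p}^{L}$ feedbacks) so that at $\alpha=0$ the system decouples into a forward SDE for $(X,Y)$ and independent linear BSDEs for $(p^{L},p^{F})$, each classically solvable, and at $\alpha=1$ it is exactly (\ref{eq3.9}). The energy computation above, carried out for the inhomogeneous difference system, furnishes an a priori bound uniform in $\alpha$; this bound shows that the set of $\alpha$ for which the family is solvable is open (local solvability by contraction near a known solution) and closed (uniform estimate plus completeness), and since it contains $\alpha=0$ it equals $[0,1]$, yielding a solution at $\alpha=1$. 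Local solvability and the contraction step are carried by a fixed-point map on the triple $(\hat{p}^{F},\hat{p}^{L},\tilde{p}^{L})$, each projection being non-expansive in $L^{2}$.

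The main obstacle is the simultaneous presence of two nested, and in principle control-dependent, observation filtrations $\mathcal{Z}^{F}_{t}\subset\mathcal{Z}^{L}_{t}$ together with three distinct conditional-mean terms in the drift. Because $Z^{1},Z^{2}$ are driven by $X$, a naive contraction would have to track filtrations that move with the iterate; I would neutralize this exactly as in the noisy-information framework of \cite{Charalambos2017Noisy}, fixing the driving filtration and treating $\hat{p}^{F},\hat{p}^{L},\tilde{p}^{L}$ as projections onto fixed subspaces so that their $L^{2}$-contractivity is available. The genuinely delicate bookkeeping is then to verify that the three projection terms recombine, in the energy identity, into a sign-definite form: this is precisely where the coefficient $B_{F}R_{FF}^{-1}R_{LF}R_{FF}^{-1}B_{F}'$ in the $Y$-drift and the positivity $R_{LF}\succ0$ from \textbf{A1b} are essential, and it is the step I expect to require the most care.
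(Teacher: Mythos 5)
Your uniqueness argument is essentially the paper's. The paper applies It\^{o}'s formula to $\langle\phi_t(X),\phi_t(p^L)\rangle-\langle\phi_t(Y),\phi_t(p^F)\rangle$ to get a sign-definite duality identity (its (\ref{dual2})), combines it with a Gronwall-type estimate on $e^{-Kt}|\phi_t(X)|^2$ to force $\phi(X)\equiv 0$, then invokes the standard BSDE stability bound for $\phi(p^F)$ and a second duality pairing $\langle\phi_t(p^L),\phi_t(Y)\rangle$ for the remaining components; your single combined energy identity plus the extension of (\ref{ineq2}) is the same mechanism, merely less layered. Your reliance on $R_{LF}\succ 0$ and the coefficient $B_{F}R_{FF}^{-1}R_{LF}R_{FF}^{-1}B_{F}'$ to close the quadratic form is exactly where the paper's identity closes as well.

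The existence half is where you diverge, and where the proposal has a genuine gap. The paper does not use continuation: it notes that under \textbf{A1a}--\textbf{A1b} an optimal $U^{L\star}$ exists (proved separately via a minimizing sequence, Mazur's theorem and convexity of the cost), and that the necessity part of Lemma~\ref{lem2} then manufactures a solution tuple of (\ref{eq3.9}) from that optimal control --- existence is by construction from the variational problem. Your continuation scheme instead hinges on a fixed-point/contraction step for $(\hat p^{F},\hat p^{L},\tilde p^{L})$, and there the premise that these are ``projections onto fixed subspaces'' fails: $\mathcal{Z}^{F}_t$ and $\mathcal{Z}^{L}_t$ are generated by $Z^{1},Z^{2}$, which are driven by the unknown $X$, so the conditioning $\sigma$-algebras move with the iterate and with your homotopy parameter $\alpha$. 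You name this obstacle yourself, but the proposed fix (fixing the driving filtration $\mathcal{F}_t$) does not address it, since the conditional expectations are taken with respect to the observation sub-$\sigma$-algebras rather than $\mathcal{F}_t$; non-expansiveness of a projection that itself depends on the iterate yields no contraction estimate. This is precisely the difficulty the paper's remark after Lemma~\ref{lem2} flags when it says the standard FBSDE existence machinery (Antonelli, Hu--Peng, Peng--Wu, the four-step scheme) does not cover this equation. To salvage your route you would need a stability result for the observation filtrations along the homotopy; otherwise you should fall back on the paper's control-theoretic construction of existence.
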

\begin{proof}
If  the solution of (\ref{eq3.9}) does not exist, then the optimal strategy $U^{L\star}$ does not exist. Assumptions \textbf{A1a} and  \textbf{A1b} imply the existence of
an optimal strategy $U^{L\star}$. It will be shown in the following Lemma~\ref{lem2} that $U^{L\star}$ should satisfy (\ref{eq3.7}) and then $(X, Y, p^{L},q^{L}, k^{L},r^{L}, p^{F}, q^{F}, r^{F} )$ is
a solution. Let's assume this proceed has been done temporarily, then the existence part is proved. To show the
uniqueness part, we need a series of priori estimates. In the following, $\phi_{t}(X)$, $\phi_{t}(Y)$, $\phi_{t}(p^{L})$, $\phi_{t}(q^{L})$, $\phi_{t}(k^{L})$, $\phi_{t}(r^{L})$,
$\phi_{t}(p^{F})$, $\phi_{t}(q^{F})$,  $\phi_{t}(r^{F})$ are defined similarly as (\ref{error}), all of them denote the deviation of two tuple of different solutions of (\ref{eq3.9}).  We first show the uniqueness of $X$, which relies on  the following two priori estimates:
\begin{equation}\label{dual2}
\begin{aligned}
&\mathbb{E}\langle G_{L}\phi_{T}(X), \phi_{T}(X)\rangle\\
&+\mathbb{E}\int_{0}^{T}\Big(\langle B_{L}R_{LL}^{-1}B_{L}^{'}\phi_{t}(\tilde{p}^{L}), \phi_{t}(\tilde{p}^{L})\rangle\\
&+\langle Q_{L}\phi_{t}(X),\phi_{t}(X)\rangle\\
&+\langle B_{F}R_{FF}^{-1}R_{LF}
R_{FF}^{-1}B_{F}^{'}\phi_{t}(\hat{p}^{F}),\phi_{t}(\hat{p}^{F})\rangle\Big)dt\\
&=\mathbb{E}\langle \phi_{0}(X), \phi_{0}(p^{L})\rangle\leq \mathbb{E}(|\phi_{0}(X)|\cdot|\phi_{0}(p^{L})|),
\end{aligned}
\end{equation}
and
\begin{equation}\label{fores2}
\begin{aligned}
&\mathbb{E}\int_{0}^{T}e^{-Kt}\left|\phi_{t}(X)\right|^{2}dt\leq \mathbb{E}\left|\phi_{0}(X)\right|^{2}\\
&+\mathbb{E}\int_{0}^{T}\langle e^{-Kt}\phi_{t}(\tilde{p}^{L}), B_{L}R_{LL}^{-1}B_{L}^{'}\phi_{t}(\tilde{p}^{L})\rangle dt\\
&+\mathbb{E}\int_{0}^{T}\langle e^{-Kt}\phi_{t}(\hat{p}^{F}),B_{F}R_{FF}^{-1}B_{F}^{'}\phi_{t}(\hat{p}^{F})\rangle dt,
\end{aligned}
\end{equation}
where they are obtained by applying It\^{o}'s formula to
$\langle \phi_{t}(X), \phi_{t}(p^{L})\rangle-\langle \phi_{t}(Y), \phi_{t}(p^{F})\rangle$ and $e^{-Kt}|\phi_{t}(X)|^{2}$ and then taking expectation respectively. We provide the latter one as follow:
\begin{equation}\label{fores1}
\begin{aligned}
&\mathbb{E}e^{-KT}\left|\phi_{T}(X)\right|^{2}+\mathbb{E}\int_{0}^{T}Ke^{-Kt}\left|\phi_{t}(X)\right|^{2}dt\\
&=\mathbb{E}\left|\phi_{0}(X)\right|^{2}+\mathbb{E}\int_{0}^{T}\Big\langle 2e^{-Kt}\phi_{t}(X), A\phi_{t}(X)\\
&~~-B_{L}R_{LL}^{-1}B_{L}^{'}\phi_{t}(\tilde{p}^{L})
-B_{F}R_{FF}^{-1}B_{F}^{'}\phi_{t}(\hat{p}^{F})\Big\rangle dt\\
&\leq \mathbb{E}\left|\phi_{0}(X)\right|^{2}
+C_{1}\mathbb{E}\int_{0}^{T}e^{-Kt}\left|\phi_{t}(X)\right|^{2}dt\\
&~~+\mathbb{E}\int_{0}^{T}\left\langle e^{-Kt}\phi_{t}(\tilde{p}^{L}), B_{L}R_{LL}^{-1}B_{L}^{'}\phi_{t}(\tilde{p}^{L})\right\rangle dt\\
&~~+\mathbb{E}\int_{0}^{T}\left\langle e^{-Kt}\phi_{t}(\hat{p}^{F}),B_{F}R_{FF}^{-1}B_{F}^{'}\phi_{t}(\hat{p}^{F})\right\rangle dt,
\end{aligned}
\end{equation}
where $C_{1}=\textrm{tr}(AA^{'})+\textrm{tr}(B_{L}R_{LL}^{-1}B_{L}^{'})+\textrm{tr}(B_{F}R_{FF}^{-1}B_{F}^{'})+1$. So (\ref{fores2}) holds by taking $K=C_{1}+1$.

Suppose \textbf{A1b} holds. From (\ref{dual2}), it always has a proper positive constant $C_{2}$ such that
\begin{equation}\label{fores3}
\begin{aligned}
&\mathbb{E}\int_{0}^{T}e^{-Kt}\left|\phi_{t}(X)\right|^{2}dt\\
&\leq \mathbb{E}\left|\phi_{0}(X)\right|^{2}
+C_{2}\mathbb{E}\left(\left|\phi_{0}(X)\right|\cdot\left|\phi_{0}(p^{L})\right|\right),
\end{aligned}
\end{equation}
thus, $\phi_{t}(X)=0$, $t\in [0,T]$. The uniqueness of $p^{F}$ follows from the
classic estimate for BSDEs below~\cite{1997Backward}:
\begin{equation}\label{back1}
\begin{aligned}
&\mathbb{E}\sup_{t\in[0,T]}
\left|\phi_{t}(p^{F})\right|^{2}+
\mathbb{E}\int_{0}^{T}\left[\left|\phi_{t}(q^{F})\right|^{2}+\left|\phi_{t}(r^{F})\right|^{2}\right]dt\\
&\leq
C_{3}\left(\mathbb{E}\left|G_{F}\phi_{T}(X)\right|+\mathbb{E}\int_{0}^{T}\left|Q_{F}\phi_{t}(X)\right|dt\right),
\end{aligned}
\end{equation}
where $C_{3}$ is a positive constant.  Therefore, we have from the uniqueness of state process $X$ that
$\phi_{t}(p^{F})=0$, $\phi_{t}(q^{F})=0$, $\phi_{t}(r^{F})$, $t\in [0,T]$. Finally, we
prove the uniqueness of the adjoint process $p^{L}$ and the state process $Y$. To achieve it, consider the first and the fifth equations of (\ref{eq3.9}) and compute the respective deviation equations:
\begin{equation}\label{devia2}
\begin{aligned}
\begin{cases}
d\phi_{t}(p^{L})=-\left[A^{'}\phi_{t}(p^{L})+Q_{F}\phi_{t}(Y)\right]dt\\
~~~~~~+\phi_{t}(q^{L})dW_{t}+\phi_{t}(k^{L})dV_{t}^{1}+\phi_{t}(r^{L})dV_{t}^{2},\\
~d\phi_{t}(Y)=\left(A\phi_{t}(Y)-B_{F}R_{FF}^{-1}B_{F}^{'}\phi_{t}(\hat{p}^{L})\right)dt,\\
~\phi_{T}(p^{L})=G_{F}\phi_{T}(Y),~~\phi_{0}(Y)=0,
\end{cases}
\end{aligned}
\end{equation}
where there are no influence to be caused by $\{\phi_{t}(X), t\in[0,T]\}$ and $\{\phi_{t}(p^{F}), t\in[0,T]\}$ since the processes $X$ and $p^{F}$ have been shown to be unique. In another word, we take
$X^{1}=X^{2}$ and $p^{F1}=p^{F2}$.

Applying It\^{o}'s formula to $\langle\phi_{t}(p^{L}), \phi_{t}(Y)\rangle$ and then taking expectation, it yields
\begin{equation}\label{dual3}
\begin{aligned}
&\mathbb{E}\langle G_{F}\phi_{T}(Y), \phi_{T}(Y)\rangle+\mathbb{E}\int_{0}^{T}\Big(
\langle Q_{F}\phi_{t}(Y), \phi_{t}(Y)\rangle\\
&~~+\langle B_{F}R_{FF}^{-1}B_{F}^{'}\phi_{t}(\hat{p}^{L}), \phi_{t}(\hat{p}^{L})\rangle\Big)dt\\
&=\mathbb{E}\langle \phi_{0}(p^{L}),\phi_{0}(Y)\rangle\leq \mathbb{E}(|\phi_{0}(p^{L})|\cdot |\phi_{0}(Y)|).
\end{aligned}
\end{equation}

A similar analysis with the above proofs leads to $\phi_{t}(Y)=0$, and then $\phi_{t}(p^{L})=0$, $\phi_{t}(q^{L})=0$, $\phi_{t}(k^{L})=0$, $\phi_{t}(r^{L})=0$, $t\in[0,T]$.
\end{proof}

\begin{lem}\label{lem2}
Let assumptions \textbf{A1a}, \textbf{A1b} be satisfied. Then, a
necessary and sufficient condition for $U^{L}$ to be an open-loop optimal strategy of the leader's problem described by (\ref{eq3.5})-(\ref{eq3.6}) is
\begin{equation}\label{eq3.7}
\begin{aligned}
U_{t}^{L\star}=-R_{L}^{-1}B_{L}^{'}\tilde{p}^{L}_{t},~~~~t\in [0,T],
\end{aligned}
\end{equation}
with $\tilde{p}_{t}^{L}=\mathbb{E}\left[p_{t}^{L}|\mathcal{Z}_{t}^{L}\right]$. Here,
$\left(p^{L}_{t},\left(q^{L}_{t},k^{L}_{t},r_{t}^{L}\right)\right)$ is computed from the unique solution of the FB-SDE (\ref{eq3.9}).
\end{lem}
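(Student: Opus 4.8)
The plan is to run a convex variational (duality) argument that treats the forward–backward system (\ref{eq3.6}) as the constraint and regards the pair $(X,p^{F})$ as an augmented state driven by the leader's control. Since $\mathbb{U}^{L}$ is convex, I would fix a candidate optimum $U^{L\star}$, take any admissible $U^{L}$, and form the perturbation $U_{t}^{L,\epsilon}=U_{t}^{L\star}+\epsilon(U_{t}^{L}-U_{t}^{L\star})$, $\epsilon\in[0,1]$. Writing $\delta U^{L}=U^{L}-U^{L\star}$ and differentiating the state at $\epsilon=0$, linearity of (\ref{eq3.6}) yields variational processes $(\bar{X},\bar{p}^{F},\bar{q}^{F},\bar{r}^{F})$ solving a homogeneous copy of (\ref{eq3.6}): here $\bar{X}$ carries no diffusion and has source term $B_{L}\delta U^{L}$, with $d\bar{p}^{F}_{t}=-[A'\bar{p}^{F}_{t}+Q_{F}\bar{X}_{t}]dt+\bar{q}^{F}_{t}dW_{t}+\bar{r}^{F}_{t}dV_{t}^{2}$ and $\bar{p}^{F}_{T}=G_{F}\bar{X}_{T}$, and the follower's feedback varying as $\delta U^{F\star}_{t}=-R_{FF}^{-1}B_{F}'\hat{\bar{p}}^{F}_{t}$. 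Differentiating the quadratic cost gives the Gâteaux derivative $\delta\mathcal{J}^{L}$ as the expectation of running terms in $\bar{X}$, $\delta U^{L}$ and $\hat{\bar{p}}^{F}$, plus the terminal term $\mathbb{E}\langle G_{L}X_{T},\bar{X}_{T}\rangle$.

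The heart of the argument is to eliminate $\bar{X}$ and $\bar{p}^{F}$ from $\delta\mathcal{J}^{L}$ using the adjoint pair $(p^{L},q^{L},k^{L},r^{L})$ and $Y$ of (\ref{eq3.9}), whose coefficients are designed for exactly this cancellation. Applying It\^{o}'s formula to $\langle p^{L}_{t},\bar{X}_{t}\rangle-\langle Y_{t},\bar{p}^{F}_{t}\rangle$ and taking expectation, the drift contributions proportional to $A$ cancel, the two $Q_{F}$-terms cancel each other by symmetry of $Q_{F}$, and (since $\bar{X}$ and $Y$ have no diffusion) the It\^{o} cross–variation and martingale terms vanish in expectation. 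The boundary data $p^{L}_{T}=G_{L}X_{T}+G_{F}Y_{T}$, $\bar{p}^{F}_{T}=G_{F}\bar{X}_{T}$ and $\bar{X}_{0}=Y_{0}=0$ collapse the endpoint contributions to exactly $\mathbb{E}\langle G_{L}X_{T},\bar{X}_{T}\rangle$. Substituting the resulting identity into $\delta\mathcal{J}^{L}$ kills the $Q_{L}$-terms and the $R_{LF}$-terms, leaving the clean expression $\delta\mathcal{J}^{L}=\mathbb{E}\int_{0}^{T}\langle R_{LL}U^{L\star}_{t}+B_{L}'p^{L}_{t},\delta U^{L}_{t}\rangle\,dt$.

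The step I expect to be the main obstacle is the correct bookkeeping of the two distinct conditional–expectation terms, which is precisely the feature that distinguishes this FB-SDE from a standard one. Because the follower's feedback uses $\hat{p}^{F}=\mathbb{E}[p^{F}\mid\mathcal{Z}^{F}]$ while the leader's uses $\tilde{p}^{L}=\mathbb{E}[p^{L}\mid\mathcal{Z}^{L}]$, and a third quantity $\hat{p}^{L}=\mathbb{E}[p^{L}\mid\mathcal{Z}^{F}]$ enters through $dY$, the above cancellations are not purely algebraic: they rely on the tower property over the nested filtration $\mathcal{Z}^{F}_{t}\subset\mathcal{Z}^{L}_{t}$. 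Concretely, in a term such as $\mathbb{E}\langle p^{L}_{t},B_{F}R_{FF}^{-1}B_{F}'\hat{\bar{p}}^{F}_{t}\rangle$ the factor $\hat{\bar{p}}^{F}_{t}$ is $\mathcal{Z}^{F}_{t}$-measurable, so $p^{L}_{t}$ may be replaced by $\hat{p}^{L}_{t}$; the symmetric partner produced by the $B_{F}R_{FF}^{-1}B_{F}'\hat{p}^{L}$ term in $dY$ then matches it and the two cancel, and likewise for the $R_{LF}$ contribution, where $\bar{p}^{F}_{t}$ is conditioned to $\hat{\bar{p}}^{F}_{t}$. Checking that the $Y$-drift is exactly what forces every $\bar{p}^{F}$-contribution to cancel after conditioning is the delicate point, and it is the technical justification for writing (\ref{eq3.9}) with precisely those $\hat{p}^{L}$ and $\hat{p}^{F}$ terms.

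To conclude, I would condition the surviving integrand on $\mathcal{Z}^{L}_{t}$: since $\delta U^{L}_{t}$ and $U^{L\star}_{t}$ are $\mathcal{Z}^{L}_{t}$-adapted, $\delta\mathcal{J}^{L}=\mathbb{E}\int_{0}^{T}\langle R_{LL}U^{L\star}_{t}+B_{L}'\tilde{p}^{L}_{t},\delta U^{L}_{t}\rangle\,dt$. Convexity of $\mathcal{J}^{L}$ (guaranteed by \textbf{A1b}, in particular $R_{LL}\succ0$) makes this first–order condition both necessary and sufficient, and the standard pointwise argument over the convex set $\mathbb{U}^{L}$ forces $R_{LL}U^{L\star}_{t}+B_{L}'\tilde{p}^{L}_{t}=0$, i.e. $U^{L\star}_{t}=-R_{LL}^{-1}B_{L}'\tilde{p}^{L}_{t}$, which is (\ref{eq3.7}); existence and uniqueness of the associated $(p^{L},q^{L},k^{L},r^{L})$ are supplied by Lemma~\ref{lem0001}.
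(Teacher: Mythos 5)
Your proposal is correct and follows essentially the same route as the paper: a convex perturbation/G\^{a}teaux-differential argument, with the duality identity obtained by applying It\^{o}'s formula to $\langle p^{L}_{t},\delta X_{t}\rangle-\langle Y_{t},\delta p^{F}_{t}\rangle$, the tower property over $\mathcal{Z}^{F}_{t}\subset\mathcal{Z}^{L}_{t}$ to make the $\hat{p}^{F}$- and $\hat{p}^{L}$-terms cancel, and a final projection onto $\mathcal{Z}^{L}_{t}$ combined with convexity to get both necessity and sufficiency. The point you flag as delicate (the conditioning needed so that the $Y$-drift cancels every $\bar{p}^{F}$-contribution) is exactly the step the paper relies on in its identity (\ref{suffi1}).
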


\begin{proof}
Based on Lemma~\ref{lem0001}, now we are able to show arguments of this lemma.  The necessary part is left to Appendix.  We show the sufficient part. Let $U^{L\star}$ be the optimal strategy for the leader and  $U^{L}$ be any other. We use $X^{\star}:=X(U^{L\star})$ and $X(U^{L})$ to distinguish the state processes under the two strategies above respectively, the similar denotations will be used for the processes $p^{F}, q^{F}, r^{F}$  and so on.

Applying It\^{o}'s formula to
$\langle\psi_{t}(X),p^{L}_{t}\rangle-\langle Y_{t}, \psi_{t}(p^{F})\rangle$, it gives
\begin{equation}
\begin{aligned}\label{suffi1}
&~~~\langle \psi_{T}(X),G_{L}X_{T}^{\star}\rangle\\
&=\int_{0}^{T}\Big(
\langle B_{L}^{'}\tilde{p}^{L}_{t}, \psi_{t}(U^{L})\rangle
-
\langle Q_{L}X_{t}^{\star},\psi_{t}(X)\rangle\\
&~-
\langle B_{F}R_{FF}^{-1}R_{LF}
R_{FF}^{-1}B_{F}^{'} \hat{p}_{t}^{F\star}, \psi_{t}(\hat{p}^{F})\rangle\Big)dt
\end{aligned}
\end{equation}
where $\psi_{t}(U^{L}):=U^{L}_{t}-U^{L\star}_{t}$, $\psi_{t}(X):=X_{t}(U^{L})-X_{t}(U^{L\star})$ and $\psi_{t}(p^{F}):=p^{F}_{t}(U^{L})-p^{F}_{t}(U^{L\star})$.

The convexity of the integrand  of  $\mathcal{J}^{L}$ with respect to their respective variables leads to
\begin{align}\label{suffi2}
&~~~~\mathcal{J}^{L}(U^{L},U^{F\star})
-\mathcal{J}^{L}(U^{L\star},U^{F\star})\nonumber\\
&\geq \mathbb{E}\langle \psi_{T}(X),G_{L}X_{T}^{\star}\rangle\nonumber\\
&~+\mathbb{E}\int_{0}^{T}\Big( \langle Q_{LL}X_{t}^{\star},\psi_{t}(X)\rangle+\langle R_{LL} U_{t}^{L\star}, \psi_{t}(U^{L})\rangle\\
&~~+\langle B_{F}R_{FF}^{-1}R_{LF}
R_{FF}^{-1}B_{F}^{'} \hat{p}_{t}^{F\star}, \psi_{t}(\hat{p}^{F})\rangle\Big)dt.\nonumber
\end{align}

Inserting the relationship (\ref{suffi1}) into (\ref{suffi2}), we have
\begin{align}\label{suffi3}
&~~~\mathcal{J}^{L}(U^{L},U^{F\star})
-\mathcal{J}^{L}(U^{L\star},U^{F\star})\nonumber\\
&~\geq
\mathbb{E}\int_{0}^{T}\left \langle R_{LL} U_{t}^{L\star}+B_{L}^{'}\tilde{p}^{L}_{t}, \psi_{t}(U^{L})\right\rangle dt.
\end{align}

Hence, sufficiency of (\ref{eq3.7}) is shown.
\end{proof}

\begin{rem}
With respect to the existence of the FB-SDE (\ref{eq3.9}), we show it from the optimal control theory. It could be proven from the stochastic dynamic system point of view. The works in this aspect may refer to  Antonelli~\cite{1993Antonelli}, Hu~{ \em et. al}~\cite{1995Hu}, Yong~\cite{Yong1997},
Peng~{ \em et. al}~\cite{Peng1999}, etc. We point out that their models do not contain ours, it still needs new method to solve it. In
the  procedure of finding the solution,  we are not able to  compute the relationship between the adjoint state and the state directly by the traditional methods, e.g. the four-step method~\cite{1994Ma} or by the nonlinear Feynman-Kac formula~\cite{Yong1999}. From the optimal control point of view,  the reason is that the Riccati equations appearing in the control and filtering parts are coupled. The randomness of one of them will affect another, certainly,  one is
deterministic, so is the other. To verify if it is deterministic or not is what we are interested in.

\end{rem}

\section{Optimal Strategies}

By grouping variables $X_t$ with $Y_t$ and $p_t^L$ with $p_t^F$, we turn the expression (\ref{eq3.9}) to a more compact form
\begin{equation}\label{eq3.12}
\begin{aligned}
\begin{cases}
d\textbf{X}_{t}=\Big(\textbf{A}\textbf{X}_{t}+\tilde{\textbf{S}}\tilde{\textbf{p}}_{t}+\hat{\textbf{S}}
\hat{\textbf{p}}_{t}\Big)dt+\textbf{D}dW_{t},\\
~d\textbf{p}_{t}=-\Big[\textbf{A}^{'}\textbf{p}_{t}+\textbf{Q}\textbf{X}_{t}\Big]dt+\textbf{q}_{t}dW_{t}\\
~~~~~~~~~+\textbf{k}_{t}dV_{t}^{1}+\textbf{r}_{t}dV_{t}^{2},\\
~d\textbf{Z}_{t}=\textbf{H}\textbf{X}_{t}dt+d\textbf{V}_{t},\\
~dZ_{t}^{2}=\textbf{H}_{F}\textbf{X}_{t}dt+dV_{t}^{2},\\
~~\textbf{X}_{0}=\gamma,~~\textbf{p}_{T}=\textbf{G}\textbf{X}_{T},~~\textbf{Z}_{0}=0,~~Z_{0}^{2}=0,
\end{cases}
\end{aligned}
\end{equation}
where
\begin{equation}\label{nota1}
\begin{aligned}
\textbf{X}_{t}&=\mathrm{col}(
     X_{t};
     Y_{t}),~~
\textbf{p}_{t}=\mathrm{col}(
     p_{t}^{L};
     p_{t}^{F}),\\~~
\textbf{q}_{t}&=\mathrm{col}(
     q_{t}^{L};
     q_{t}^{F}),
     ~~\textbf{k}_{t}=\mathrm{col}(
     k_{t}^{L};
     0),\\~~
~~\textbf{r}_{t}&=\mathrm{col}(r_{t}^{L};r_{t}^{F}), ~~\gamma=\mathrm{col}(
                      \xi;
                     0),\\
\textbf{A}&=\mathrm{diag}(A, A), ~~\tilde{\textbf{S}}=\mathrm{diag}(-B_{L}R_{LL}^{-1}B^{'}_{L},0),\\
~~\textbf{D}&=\mathrm{col}(D;0),~~~~\textbf{H}_{L}=(H_{1},0),\\
                  \hat{\textbf{S}}&=\left(
                    \begin{array}{cc}
                      0 & -B_{F}R_{FF}^{-1}B^{'}_{F} \\
                      -B_{F}R_{FF}^{-1}B^{'}_{F} & B_{F}R_{FF}^{-1}R_{LF}R_{FF}^{-1}B_{F}^{'} \\
                    \end{array}
                  \right),\\~~\textbf{Q}&=\left(
                    \begin{array}{cc}
                      Q_{L} & Q_{F} \\
                      Q_{F} & 0 \\
                    \end{array}
                  \right),~~\textbf{G}=\left(
                    \begin{array}{cc}
                      G_{L} & G_{F} \\
                      G_{F} & 0 \\
                    \end{array}
                  \right),\\
\textbf{H}&=\left(
                    \begin{array}{cc}
                      H_{1} & 0 \\
                      H_{2} & 0 \\
                    \end{array}
                  \right),~~\textbf{H}_{F}=(H_{2},0),\\
                  ~~\textbf{Z}_{t}&=\mathrm{col}(
     Z_{t}^{1};
     Z_{t}^{2}),~~\textbf{V}_{t}=\mathrm{col}(
     V_{t}^{1};
     V_{t}^{2}).
\end{aligned}
\end{equation}
Moreover, $\tilde{\textbf{p}}_{t}:=\mathbb{E}\left[\textbf{p}_{t}|\mathcal{Z}_{t}^{L}\right]$ and $\hat{\textbf{p}}_{t}:=\mathbb{E}\left[\textbf{p}_{t}|\mathcal{Z}_{t}^{F}\right]$.
\bigskip

Inspired by the ordinary differential equation theory, the main idea is to change a two-point boundary value problem to a Cauchy initial value problem~\cite{Zill2009}. To do it, some functional relationships between the adjoint state variables and the state variables need to be found, relating $\textbf{p}$ with $\textbf{X}$, $\tilde{\textbf{p}}$ with $\tilde{\textbf{X}}$, $\hat{\textbf{p}}$ with $\hat{\textbf{X}}$, three layers, where $\tilde{\textbf{X}}:=\{\tilde{\textbf{X}}_{t}, t\in[0,T]\}$ and $\hat{\textbf{X}}:=\{\hat{\textbf{X}}_{t}, t\in[0,T]\}$ with $\tilde{\textbf{X}}_{t}:=\mathbb{E}[\textbf{X}_{t}|\mathcal{Z}_{t}^{L}]$ and $\hat{\textbf{X}}_{t}:=\mathbb{E}[\textbf{X}_{t}|\mathcal{Z}_{t}^{F}]$. Among them, the last two parts are enough to design feedback type strategy, we state it   in the following Theorem.

\begin{thm}\label{lem3b}
Let assumptions \textbf{A1a}, \textbf{A1b} be satisfied. With notations in (\ref{nota1}), the feedback type strategies of the leader and the follower given by (\ref{eq3.3}) and (\ref{eq3.7}) are given, respectively, by
\begin{equation}\label{c17}
\begin{aligned}
U_{t}^{F\star}=&-R^{-1}_{FF}B_{F}^{'}\Big[P_{t,21},P_{t,22}\Big]\hat{\textbf{X}}_{t},
\end{aligned}
\end{equation}
and
\begin{equation}\label{0c17}
\begin{aligned}
U_{t}^{L\star}=&-R^{-1}_{LL}B_{L}^{'}
\Big[P_{t,11}^{\dag},P_{t,12}^{\dag}\Big]\tilde{\textbf{X}}_{t}\\
&-R^{-1}_{LL}B_{L}^{'}\Big[P_{t,11}-P_{t,11}^{\dag},
P_{t,12}-P_{t,12}^{\dag}\Big]\hat{\textbf{X}}_{t},
\end{aligned}
\end{equation}
where $P_{t,ij},~P^{\dag}_{t,ij}, ~i,j=1,2$ are, respectively, the $ij$-element of the $2\times2$-blocked matrices $P_{t}$ and $P^{\dag}_{t}$, computed by (\ref{c19}) below
\begin{equation}\label{c19}
\begin{aligned}
&\dot{P}_{t}^{\dag}+\textbf{A}^{'}P_{t}^{\dag}+P_{t}^{\dag}\textbf{A}+P_{t}^{\dag}
\tilde{\textbf{S}}P_{t}^{\dag}+\textbf{Q}\\
&~~~~+P^{\ddag}_{t}\Xi_{t,11}\textbf{H}_{F}^{'}\textbf{H}_{F}=0,
~~P_{t}=P_{t}^{\dag}+P_{t}^{\ddag},\\
&\dot{P}_{t}+\textbf{A}^{'}P_{t}+P_{t}\textbf{A}+P_{t}(\tilde{\textbf{S}}
+\hat{\textbf{S}})P_{t}+\textbf{Q}=0,\\
&\dot{\Sigma}_{t}=\textbf{A}\Sigma_{t}+\Sigma_{t}\textbf{A}^{'}
-\Sigma_{t}\textbf{H}^{'}\textbf{H}\Sigma_{t}+
\textbf{D}\textbf{D}^{'},\\
&\dot{\Xi}_{t}=F_{t}\Xi_{t}+\Xi_{t}F_{t}^{'}+\check{D}\check{D}^{'}
+G^{\dag}_{t}G^{\dag'}_{t}
+G^{\ddag}_{t}G^{\ddag'}_{t}\\
&~~~~-(\Xi_{t}\check{H}^{'}_{F}+G^{\ddag}_{t})(\check{H}_{F}\Xi_{t}
+G^{\ddag'}_{t}),\\
&P_{T}^{\dag}=\textbf{G},~~P_{T}=\textbf{G},
~~\Sigma_{0}=\mathrm{cov}(\gamma,\gamma),\\
&~\Xi_{0}=\left[\begin{array}{cc}
\Sigma_{0} & \Sigma_{0} \\
 \Sigma_{0} & \Sigma_{0} \\
 \end{array}
 \right],
\end{aligned}
\end{equation}
with $\Xi_{t,11}$ being the $11$-element of the $2\times2$-blocked matrix $\Xi_{t}$ and
\begin{equation}\label{c22}
\begin{aligned}
F_{t}&=\left[
               \begin{array}{cc}
                \textbf{ A}+\tilde{\textbf{S}}P^{\dag}_{t} & -\tilde{\textbf{S}}P_{t}^{\dag} \\
                 0 & \textbf{A}-\Sigma_{t}\textbf{H}^{'}\textbf{H} \\
               \end{array}
             \right], \\
             \check{D}&=\mathrm{col}(\textbf{D};\textbf{D}),
             ~~G^{\dag}_{t}=\mathrm{col}(0;-\Sigma_{t}\textbf{H}^{'}_{L}),\\ ~~G^{\ddag}_{t}&=\mathrm{col}(0;-\Sigma_{t}\textbf{H}^{'}_{F}), ~~\check{H}_{F}=\left(\left[H_{2},0\right],\left[0,0\right]\right).
\end{aligned}
\end{equation}
In addition, the augmented system state estimates $\hat{\textbf{X}}_{t}$ and  $\tilde{\textbf{X}}_{t}$ appeared in (\ref{c17}) and (\ref{0c17}) are given, respectively, by the two Kalman-Bucy filters
\begin{equation}\label{c21}
\begin{aligned}
\begin{cases}
d\hat{\textbf{X}}_{t}=\Big([\textbf{A}+(\tilde{\textbf{S}}+\hat{\textbf{S}})P_{t}]
\hat{\textbf{X}}_{t}\Big)dt+\Xi_{t,11}\textbf{H}^{'}_{F}d\textbf{I}_{t}^{F},\\
~d\textbf{I}_{t}^{F}=dZ_{t}^{2}-\textbf{H}_{F}\hat{\textbf{X}}_{t}dt,
~~\hat{\textbf{X}}_{0}=\mathbb{E}[\gamma],
\end{cases}
\end{aligned}
\end{equation}
and
\begin{equation}\label{c20}
\begin{aligned}
\begin{cases}
d\tilde{\textbf{X}}_{t}=\Big([\textbf{A}
+\tilde{\textbf{S}}P^{\dag}_{t}]\tilde{\textbf{X}}_{t}+[(\tilde{\textbf{S}}+\hat{\textbf{S}})P_{t}
-\tilde{\textbf{S}}P^{\dag}_{t}]\hat{\textbf{X}}_{t}\Big)dt\\
~~~~~~~~~+\Sigma_{t}\textbf{H}^{'}d\textbf{I}_{t},\\
~d\textbf{I}_{t}=d\textbf{Z}_{t}-\textbf{H}\tilde{\textbf{X}}_{t}dt,
~~\tilde{\textbf{X}}_{0}=\mathbb{E}[\gamma].
\end{cases}
\end{aligned}
\end{equation}
\end{thm}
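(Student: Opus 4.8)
The plan is to turn the two-point boundary value problem (\ref{eq3.12}) into a forward Cauchy problem by building, layer by layer, linear relations between the filtered adjoint processes and the filtered states, and then to check that the assembled tuple solves (\ref{eq3.9}); uniqueness from Lemma~\ref{lem0001} will then certify that this is the solution, so that (\ref{eq3.3}) and (\ref{eq3.7}) collapse to (\ref{c17}) and (\ref{0c17}). First I would project (\ref{eq3.12}) onto the two nested filtrations. Taking $\mathbb{E}[\,\cdot\mid\mathcal{Z}_t^F]$ and using $\mathcal{Z}_t^F\subset\mathcal{Z}_t^L$ with the tower property (so $\mathbb{E}[\tilde{\textbf{p}}_t\mid\mathcal{Z}_t^F]=\hat{\textbf{p}}_t$, and $\hat{\textbf{p}}_t$ is already $\mathcal{Z}_t^F$-measurable), the follower-filtered state obeys $d\hat{\textbf{X}}_t=(\textbf{A}\hat{\textbf{X}}_t+(\tilde{\textbf{S}}+\hat{\textbf{S}})\hat{\textbf{p}}_t)\,dt+(\text{innovation})$ driven by $d\textbf{I}_t^F=dZ_t^2-\textbf{H}_F\hat{\textbf{X}}_t\,dt$, while the leader-filtered state obeys $d\tilde{\textbf{X}}_t=(\textbf{A}\tilde{\textbf{X}}_t+\tilde{\textbf{S}}\tilde{\textbf{p}}_t+\hat{\textbf{S}}\hat{\textbf{p}}_t)\,dt+(\text{innovation})$ driven by $d\textbf{I}_t=d\textbf{Z}_t-\textbf{H}\tilde{\textbf{X}}_t\,dt$.

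In the inner layer I would posit $\hat{\textbf{p}}_t=P_t\hat{\textbf{X}}_t$, which is forced to satisfy $P_T=\textbf{G}$ by the terminal condition $\textbf{p}_T=\textbf{G}\textbf{X}_T$. Differentiating this relation, inserting the follower-filtered dynamics above and the $\mathcal{Z}_t^F$-projection $d\hat{\textbf{p}}_t=-[\textbf{A}'\hat{\textbf{p}}_t+\textbf{Q}\hat{\textbf{X}}_t]\,dt+(\text{innovation})$ of the adjoint equation, and matching the $dt$-coefficient of $\hat{\textbf{X}}_t$ yields the backward Riccati equation $\dot P_t+\textbf{A}'P_t+P_t\textbf{A}+P_t(\tilde{\textbf{S}}+\hat{\textbf{S}})P_t+\textbf{Q}=0$, $P_T=\textbf{G}$, i.e. the second line of (\ref{c19}); matching the martingale parts fixes the gain and returns the Kalman--Bucy filter (\ref{c21}). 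Reading off the $p^F$-block of $\hat{\textbf{p}}_t=P_t\hat{\textbf{X}}_t$ and substituting into (\ref{eq3.3}) gives the follower strategy (\ref{c17}).

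In the outer layer I would posit the affine relation $\tilde{\textbf{p}}_t=P_t^\dag\tilde{\textbf{X}}_t+P_t^\ddag\hat{\textbf{X}}_t$ with $P_t^\ddag:=P_t-P_t^\dag$; this is the extension of the inner relation that stays consistent on $\mathcal{Z}_t^F$ (where $\tilde{\textbf{X}}_t$ and $\hat{\textbf{X}}_t$ agree) and meets $P_T^\dag=\textbf{G}$. Differentiating, substituting the leader-filtered dynamics and the $\mathcal{Z}_t^L$-projection of the adjoint equation, and matching the coefficients of $\tilde{\textbf{X}}_t$ and $\hat{\textbf{X}}_t$ (the latter being consistent with the two Riccati equations and imposing nothing new) produces the first line of (\ref{c19}) and the filter (\ref{c20}). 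The hard part will be exactly here: the two innovations are not independent, so the follower innovation, viewed in the leader filtration, is no longer a martingale but acquires the drift correction $\textbf{H}_F(\tilde{\textbf{X}}_t-\hat{\textbf{X}}_t)\,dt$ relative to the leader innovation, since $e_t^F:=\textbf{X}_t-\hat{\textbf{X}}_t$ and $e_t^L:=\textbf{X}_t-\tilde{\textbf{X}}_t$ differ by $\tilde{\textbf{X}}_t-\hat{\textbf{X}}_t$. Passing this correction through the term $P_t^\ddag\Xi_{t,11}\textbf{H}_F'\,d\textbf{I}_t^F$ inherited from $d\hat{\textbf{X}}_t$ generates precisely the coupling $P_t^\ddag\Xi_{t,11}\textbf{H}_F'\textbf{H}_F$ appended to the $P_t^\dag$-equation, which is the concrete manifestation of the breakdown of the separation principle in this setting. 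The leader strategy (\ref{eq3.7}), being $-R_{LL}^{-1}B_L'$ times the $p^L$-block of $\tilde{\textbf{p}}_t$, then gives (\ref{0c17}).

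It remains to identify the two covariances. Because $\tilde{\textbf{S}}\tilde{\textbf{p}}_t+\hat{\textbf{S}}\hat{\textbf{p}}_t$ is $\mathcal{Z}_t^L$-measurable it cancels in $e_t^L$, whose dynamics $de_t^L=(\textbf{A}-\Sigma_t\textbf{H}'\textbf{H})e_t^L\,dt+\textbf{D}\,dW_t-\Sigma_t\textbf{H}'\,d\textbf{V}_t$ form a standard Kalman--Bucy error, giving the autonomous Riccati equation for $\Sigma_t=\mathrm{cov}(e_t^L)$ (third line of (\ref{c19})). The follower error $e_t^F$ couples to $e_t^L$ through $-\tilde{\textbf{S}}P_t^\dag$, so its covariance does not close by itself; stacking $e_t=\mathrm{col}(e_t^F;e_t^L)$ gives a joint error system with drift $F_t$, common process noise $\check D$, shared-$V^2$ observation $\check H_F$, and separate noise inputs $G_t^\dag$ (from $V^1$) and $G_t^\ddag$ (from $V^2$) as in (\ref{c22}), whence $\Xi_t$ satisfies the last line of (\ref{c19}) with $\Xi_{t,11}=\mathrm{cov}(e_t^F)$ supplying the gain in (\ref{c21}); the initial value $\Xi_0$ in (\ref{c19}) is consistent because $e_0^F=e_0^L=\gamma-\mathbb{E}[\gamma]$. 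Finally I would assemble $(\textbf{X},\textbf{p},\dots)$ from these relations and filters, verify that it solves (\ref{eq3.9}), and invoke Lemma~\ref{lem0001}: uniqueness shows this is the solution and, a posteriori, that the coupled Riccati system is solvable on all of $[0,T]$, so no separate global-existence argument for the Riccati equations is needed. The principal obstacle throughout is the bookkeeping of the two non-independent innovation processes and the resulting two-way coupling between the control Riccati equation $P_t^\dag$ and the filtering covariance $\Xi_t$.
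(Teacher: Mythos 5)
Your proposal is correct in substance and follows the same layered strategy as the paper (inner layer for $\hat{\textbf{p}}_t=P_t\hat{\textbf{X}}_t$, outer layer for $\tilde{\textbf{p}}_t=P_t^{\dag}\tilde{\textbf{X}}_t+P_t^{\ddag}\hat{\textbf{X}}_t$ with $P^{\ddag}=P-P^{\dag}$, then the stacked error system for $\Xi$), and you correctly identify the key mechanism: rewriting $d\textbf{I}_t^{F}=\textbf{H}_F(\textbf{X}_t-\hat{\textbf{X}}_t)dt+dV_t^2$ so that, under $\mathcal{Z}_t^{L}$, the follower's gain term injects the coupling $P_t^{\ddag}\Xi_{t,11}\textbf{H}_F^{'}\textbf{H}_F$ into the $P^{\dag}$ equation. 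The tactical route differs, though. You differentiate the posited relations and match coefficients, which requires writing the filtered BSDE $d\hat{\textbf{p}}_t=-[\textbf{A}^{'}\hat{\textbf{p}}_t+\textbf{Q}\hat{\textbf{X}}_t]dt+(\mathrm{innovation})$; the paper instead integrates the backward equation to the explicit representation (\ref{in1}), conditions it to get $\hat{\textbf{p}}_t$ and $\tilde{\textbf{p}}_t$ as integrals of predictors, solves the predictor ODEs via transition matrices $\Phi,\Psi,\Pi$, and reads off $P$, $P^{\dag}$, $P^{\ddag}$ (and $\varphi\equiv 0$) by undetermined coefficients, differentiating the resulting integral formulas to obtain (\ref{c19}). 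The paper's route avoids invoking a filtering theorem for the backward process and, more importantly, produces $P$ and $P^{\dag}$ as functionals of the already-existing unique solution of (\ref{eq3.9}). Your stacking $\mathrm{col}(e^F;e^L)$ versus the paper's $\mathrm{col}(\textbf{X};e^L)$ is an equivalent change of variables since $\hat{\textbf{X}}_t$ is $\mathcal{Z}_t^{F}$-measurable.

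One soft spot: your closing claim that global solvability of the coupled Riccati--covariance system needs no separate argument because it follows "a posteriori" from uniqueness of (\ref{eq3.9}) has the logic inverted for a guess-and-verify proof. To assemble a candidate tuple and verify it solves (\ref{eq3.9}), you must first have $P$, $P^{\dag}$, $\Xi$ defined on all of $[0,T]$; and since $\tilde{\textbf{S}}+\hat{\textbf{S}}$ and $\textbf{Q}$ are indefinite, blow-up is not ruled out by standard LQ arguments. The derivation-from-the-solution order used in the paper (necessity first, via the integral representations) is what lets existence of these deterministic coefficients be inherited from Lemma~\ref{lem0001} rather than assumed.
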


\begin{proof}
(\textbf{Inner Layer Calculation})
We seek the relationship between $\hat{\textbf{p}}_{t}$ and $\hat{\textbf{X}}_{t}$ in this step.  We first calculate $\textbf{p}_{t}$ from the second equation of (\ref{eq3.12}). By a simple calculation, it reads
\begin{equation}\label{in1}
\begin{aligned}
\textbf{p}_{t}&=\Phi^{'}(T,t)\textbf{G}\textbf{X}_{T}
+\int_{t}^{T}\Phi^{'}(s,t)\textbf{Q}\textbf{X}_{s}ds\\
&-\int_{t}^{T}\Phi^{'}(s,t)\Big[\textbf{q}_{s}dW_{s}
+\textbf{k}_{s}dV_{s}^{1}+\textbf{r}_{s}dV_{s}^{2}\Big],
\end{aligned}
\end{equation}
where the state transition  matrix $\Phi^{'}(s,t)$ satisfies
\begin{equation}\label{cc22}
\begin{aligned}
\begin{cases}
\frac{\partial \Phi^{'}(s,t)}{\partial t}=-\textbf{A}^{'}\Phi^{'}(s,t),\\
\Phi^{'}(s,s)=I_{id}.
\end{cases}
\end{aligned}
\end{equation}

Next, we calculate the estimate $\hat{\textbf{p}}_{t}$. For simplicity we use
\begin{align*}
\hat{\textbf{X}}_{s|t}:=\mathbb{E}[\textbf{X}_{s}|\mathcal{Z}_{t}^{F}], ~~~~ \hat{\textbf{p}}_{s|t}:=\mathbb{E}[\textbf{p}_{s}|\mathcal{Z}_{t}^{F}],~~~~s\geq t,
\end{align*}
to denote the predictions of $\textbf{X}_{s}$ and $\textbf{p}_{s}$ with respect to $\mathcal{Z}_{t}^{F}$.

Taking conditional expectation, with respect to $\mathcal{Z}_{t}^{F}$, on both sides of (\ref{in1}), it yields
\begin{equation}\label{in2}
\begin{aligned}
\hat{\textbf{p}}_{t}&=\Phi^{'}(T,t)\textbf{G}\hat{\textbf{X}}_{T|t}
+\int_{t}^{T}\Phi^{'}(s,t)
\textbf{Q}\hat{\textbf{X}}_{s|t}ds.
\end{aligned}
\end{equation}

Noticing the left-hand side of (\ref{in2}) involved the prediction terms $\hat{\textbf{X}}_{T|t}$ and $\hat{\textbf{X}}_{s|t}$, we are therefore going to calculate the estimate $\hat{\textbf{X}}_{t}$. To do it, consider the signal model constructed by the first and the forth equations of (\ref{eq3.12}). Based on Theorem 6.6
\cite{Jazwinski1970}, this estimate can be obtained by the following nonlinear filter equation
\begin{equation}\label{b16}
\begin{aligned}
\begin{cases}
d\hat{\textbf{X}}_{t}=\Big(\textbf{A}\hat{\textbf{X}}_{t}
+[\tilde{\textbf{S}}+\hat{\textbf{S}}]\hat{\textbf{p}}_{t}\Big)dt+\Upsilon_{t}\textbf{H}^{'}_{F}d\textbf{I}_{t}^{F},\\
d\textbf{I}_{t}^{F}:=dZ_{t}^{2}-\textbf{H}_{F}\hat{\textbf{X}}_{t}dt,\\
~\Upsilon_{t}:=\mathbb{E}[\textbf{X}_{t}\textbf{X}_{t}^{'}|\mathcal{Z}_{t}^{F}]
-\hat{\textbf{X}}_{t}\hat{\textbf{X}}_{t}^{'}.
\end{cases}
\end{aligned}
\end{equation}

Based on the filter (\ref{b16}), we now are able to derive the prediction terms stated as above. Utilizing $\mathbb{E}\Big[\mathbb{E}[\textbf{X}_{s}|\mathcal{Z}_{s}^{F}]\Big|\mathcal{Z}_{t}^{F}\Big]
=\mathbb{E}[\textbf{X}_{s}|\mathcal{Z}_{t}^{F}]$,
~$s\geq t$, the predictor, referring to Section~3.7~\cite{Ahmed1998Linear}, is given by
\begin{equation}\label{b17}
\begin{aligned}
d\hat{\textbf{X}}_{s|t}&=\Big(\textbf{A}\hat{\textbf{X}}_{s|t}
+[\tilde{\textbf{S}}+\hat{\textbf{S}}]\hat{\textbf{p}}_{s|t}\Big)ds,\\
~~~~~~\hat{\textbf{X}}_{t|t}&=\hat{\textbf{X}}_{t},~~s\geq t.
\end{aligned}
\end{equation}

Observing (\ref{in2}) and (\ref{b17}), it hints us to let
\begin{equation}\label{b18a}
\begin{aligned}
\varphi_{t}=\hat{\textbf{p}}_{t}-P_{t}\hat{\textbf{X}}_{t},~~~~t\in[0,T],
\end{aligned}
\end{equation}
where $P, \varphi$ are two  processes to be determined.

Finally, we use a undetermined coefficient method to pin down $P, \varphi$ above. Applying (\ref{b18a}) to (\ref{b17}), the explicit  solution of the predictor is
\begin{equation}\label{b21}
\begin{aligned}
\hat{\textbf{X}}_{s|t}&=\int_{t}^{s}
\Psi(s,\tau)\Big[(\tilde{\textbf{S}}+\hat{\textbf{S}})\varphi_{\tau}\Big]d\tau+\Psi(s,t)\hat{\textbf{X}}_{t},
\end{aligned}
\end{equation}
where the state transition matrix $\Psi(s,t)$ is given by
\begin{equation}\label{b20}
\begin{aligned}
\begin{cases}
\frac{\partial \Psi(s,t)}{\partial s}=(\textbf{A}+[\tilde{\textbf{S}}
+\hat{\textbf{S}}]P_{s})\Psi(s,t),\\
~\Psi(t,t)=I_{id}.
\end{cases}
\end{aligned}
\end{equation}

Inserting (\ref{b21}) into (\ref{in2}) and comparing with (\ref{b18a}), it can be verified that
\begin{align*}
P_{t}&=\int_{t}^{T}\Phi^{'}(s,t)\textbf{Q}\Psi(s,t)ds+\Phi^{'}(T,t)\textbf{G}
\Psi(T,t),\\
\varphi_{t}&=\Phi^{'}(T,t)\textbf{G}\int_{t}^{T}
\Psi(T,s)(\tilde{\textbf{S}}+\hat{\textbf{S}})\varphi_{s}ds\\
&+\int_{t}^{T}\int_{t}^{s}\Phi^{'}(s,t)\textbf{Q}\Psi(s,\tau)\Big[(\tilde{\textbf{S}}+\hat{\textbf{S}})
\varphi_{\tau}\Big]d\tau ds.
\end{align*}

Taking partial derivation with respect to the time variable $t$, it yields
the second equation of (\ref{c19}) and $\varphi_{t}\equiv0$, ~$t\in [0,T]$.

(\textbf{Outer Layer Calculation})~
From now on, the first three equations of (\ref{eq3.12}) have changed into
\begin{equation}\label{b25}
\begin{aligned}
\begin{cases}
d\textbf{X}_{t}=\Big(\textbf{A}\textbf{X}_{t}+\tilde{\textbf{S}}\tilde{\textbf{p}}_{t}+\hat{\textbf{S}}
P_{t}\hat{\textbf{X}}_{t}\Big)dt+\textbf{D}dW_{t},\\
d\textbf{p}_{t}=-\Big[\textbf{A}^{'}\textbf{p}_{t}+\textbf{Q}\textbf{X}_{t}\Big]dt+\textbf{q}_{t}dW_{t}\\
~~~~~~~~+\textbf{k}_{t}dV_{t}^{1}+\textbf{r}_{t}dV_{t}^{2},\\
d\textbf{Z}_{t}=\textbf{H}\textbf{X}_{t}dt+d\textbf{V}_{t},\\
~~\textbf{X}_{0}=\gamma,~~p_{T}=\textbf{G}\textbf{X}_{T},~~\textbf{Z}_{0}=0.
\end{cases}
\end{aligned}
\end{equation}

We look for the relationship between $\tilde{\textbf{p}}_{t}$ and $\tilde{\textbf{X}}_{t}$ in this step, its proof is similar to the inner layer part. Via (\ref{b25}), we compute these two variables. For simplicity, denote by
\begin{align*}
\tilde{\textbf{X}}_{s|t}:=\mathbb{E}[\textbf{X}_{s}|\mathcal{Z}_{t}^{L}],~~~~
\tilde{\textbf{p}}_{s|t}:=\mathbb{E}[\textbf{p}_{s}|\mathcal{Z}_{t}^{L}],~~~~
s\geq t,
\end{align*}
the predictions of $\textbf{X}_{s}$ and $\textbf{p}_{s}$ with respect to $\mathcal{Z}_{t}^{L}$.

From (\ref{in1}), a similar procedure with (\ref{in2}), the conditional mean $\tilde{\textbf{p}}_{t}$ is given by
\begin{equation}\label{in3}
\begin{aligned}
\tilde{\textbf{p}}_{t}&=\Phi^{'}(T,t)\textbf{G}\tilde{\textbf{X}}_{T|t}+\int_{t}^{T}\Phi^{'}(s,t)
\textbf{Q}\tilde{\textbf{X}}_{s|t}ds.
\end{aligned}
\end{equation}

Now, we are in a position to calculate $\tilde{\textbf{X}}_{t}$. Consider the signal model constructed by the first and the third
equations of (\ref{b25}).  By Kalman-Bucy filtering theory~\cite{Jazwinski1970}, it is given by
\begin{equation}\label{in4}
\begin{aligned}
\begin{cases}
d\tilde{\textbf{X}}_{t}=\Big(\textbf{A}\tilde{\textbf{X}}_{t}+\tilde{\textbf{S}}\tilde{\textbf{p}}_{t}+\hat{\textbf{S}}
P_{t}\hat{\textbf{X}}_{t}\Big)dt+\Sigma_{t}\textbf{H}^{'}d\textbf{I}_{t},\\
~d\textbf{I}_{t}=d\textbf{Z}_{t}-\textbf{H}\tilde{\textbf{X}}_{t}dt,
\end{cases}
\end{aligned}
\end{equation}
where the initial data and $\Sigma$ are the same with the ones in (\ref{c20}) and (\ref{c19}) respectively.

Observing (\ref{in3}) and (\ref{in4}), it hints us to let
\begin{equation}\label{in5}
\begin{aligned}
\tilde{\textbf{p}}_{t}=P^{\dag}_{t}\tilde{\textbf{X}}_{t}
+P^{\ddag}_{t}\hat{\textbf{X}}_{t},
\end{aligned}
\end{equation}
where $P^{\dag}$ is a deterministic matrix-valued  process to be determined and note that
$P^{\ddag}=P-P^{\dag}$.

Next, we write the first system equation in (\ref{b25}) and the
  filer equations (\ref{b16}) and (\ref{in4}) together. It is in a form of
\begin{small}
\begin{equation}\label{in1112}
\begin{aligned}
\left(
  \begin{array}{c}
  d\textbf{X}_{t}\\
    d\tilde{\textbf{X}}_{t} \\
    d\hat{\textbf{X}}_{t} \\
  \end{array}
\right)
&=\Theta_{t}\left(
  \begin{array}{c}
  \textbf{X}_{t}\\
    \tilde{\textbf{X}}_{t} \\
    \hat{\textbf{X}}_{t} \\
  \end{array}
\right)dt
+\left(
            \begin{array}{c}
            \textbf{D}\\
              0 \\
              0 \\
            \end{array}
          \right)dW_{t}\\
          &~~~~+\left(
            \begin{array}{c}
            0\\
              \Sigma_{t}\textbf{H}^{'} \\
              0 \\
            \end{array}
          \right)d\textbf{I}_{t}+\left(
                              \begin{array}{c}
                              0\\
                                0 \\
                                \Upsilon_{t}\textbf{H}^{'}_{F} \\
                              \end{array}
                            \right)dV_{t}^{2},
\end{aligned}
\end{equation}
with
\begin{align}\label{rep1111}
\Theta_{t}=\left[
    \begin{array}{ccc}
     \textbf{A} & \tilde{\textbf{S}}P_{t}^{\dag} & \tilde{\textbf{S}}P_{t}^{\ddag}+\hat{\textbf{S}}P_{t}\\
      0 & \textbf{A}+\tilde{\textbf{S}}P_{t}^{\dag} & \tilde{\textbf{S}}P_{t}^{\ddag}+\hat{\textbf{S}}P_{t} \\
      \Upsilon_{t}\textbf{H}^{'}_{F}\textbf{H}_{F}& 0 & \textbf{A}
+[\tilde{\textbf{S}}+\hat{\textbf{S}}]P_{t}-\Upsilon_{t}\textbf{H}^{'}_{F}\textbf{H}_{F} \\
    \end{array}
  \right],
\end{align}
\end{small}
where the observation equation (the forth equation in (\ref{eq3.12}) ) is used to substitute the innovation term $\textbf{I}_{t}^{F}$ with the observation noise $V^{2}_{t}$.

Define  the following state transition matrix
\begin{align}\label{st11}
\begin{cases}
\frac{\partial \Pi(s,t)}{\partial s}=\Theta_{s}\Pi(s,t),~~~~s\geq t,\\
  \Pi(t,t)=I_{id}.
  \end{cases}
\end{align}

Via (\ref{st11}), the prediction of the state variable of (\ref{in1112}) is expressed as:
\begin{align}\label{in2222}
\left(
  \begin{array}{c}
  \mathbb{E}[\textbf{X}_{s}|\mathcal{Z}_{t}^{L}]\\
    \mathbb{E}[\tilde{\textbf{X}}_{s}|\mathcal{Z}_{t}^{L}] \\
  \mathbb{E}[\hat{\textbf{X}}_{s}|\mathcal{Z}_{t}^{L}] \\
  \end{array}
\right)=\Pi(s,t)\left(
  \begin{array}{c}
  \tilde{\textbf{X}}_{t}\\
    \tilde{\textbf{X}}_{t} \\
  \hat{\textbf{X}}_{t} \\
  \end{array}
\right),~~s\geq t,
\end{align}
where the last three stochastic integrals disappear, since the independent increment property of the $\{\mathcal{Z}_{t}^{L}, t\in[0,T]\}$-adapted
Wiener processes $\{\textbf{I}_{t}, t\in[0,T]\}$ and the following equalities:
\begin{align*}
&\mathbb{E}\left[\int_{t}^{s}\Pi(s,\tau)[\cdot]dW_{\tau}\Big|\mathcal{Z}_{t}^{L}\right]\\
&=\mathbb{E}\left[\mathbb{E}\left[\int_{t}^{s}\Pi(s,\tau)
[\cdot]dW_{\tau}\Big|\mathcal{F}_{t}\right]\bigg|\mathcal{Z}_{t}^{L}\right]=0,\\
&\mathbb{E}\left[\int_{t}^{s}\Pi(s,\tau)[\cdot]dV^{2}_{\tau}|\mathcal{Z}_{t}^{L}\right]\\
&=\mathbb{E}\left[\mathbb{E}\left[\int_{t}^{s}\Pi(s,\tau)
[\cdot]dV^{2}_{\tau}\Big|\mathcal{F}_{t}\right]\bigg|\mathcal{Z}_{t}^{L}\right]=0.
\end{align*}
In the above, the tower property of the conditional expectation has been used.

Applying (\ref{in2222}) to (\ref{in3}) and comparing with (\ref{in5}), we have
\begin{align}\label{para2222}
\left(P_{t}^{1,\dag},P_{t}^{2,\dag}, P_{t}^{\ddag}\right)&=\Phi^{'}(T,t)\textbf{G}\left[0,I_{id}, 0\right]\Pi(T,t)\nonumber\\
&+\int_{t}^{T}\Phi^{'}(s,t)\textbf{Q}\left[0,I_{id}, 0\right]\Pi(s,t)ds,
\end{align}
where $P_{t}^{\dag}=P_{t}^{1,\dag}+P_{t}^{2,\dag}$, $t\in [0,T]$.

Taking partial derivation with respect to the time variable $t$, it yields:
\begin{align}\label{para3333}
&\dot{P}_{t}^{1,\dag}+\textbf{A}^{'}P_{t}^{1,\dag}+P_{t}^{1,\dag}\textbf{A}+
P_{t}^{\ddag}\Upsilon_{t}\textbf{H}_{F}^{'}\textbf{H}_{F}=0,\\
&\dot{P}_{t}^{2,\dag}+\textbf{A}^{'}P_{t}^{2,\dag}+P_{t}^{2,\dag}\textbf{A}+
P_{t}^{\dag}\tilde{\textbf{S}}P_{t}^{\dag}+\textbf{Q}=0,\\
&P_{T}^{1,\dag}=0,~~P_{T}^{2,\dag}=\textbf{G}.\nonumber
\end{align}

Thus, $P_{t}^{\dag}$ satisfies
\begin{align}\label{pa44444}
&\dot{P}_{t}^{\dag}+\textbf{A}^{'}P_{t}^{\dag}+P_{t}^{\dag}\textbf{A}
+P_{t}^{\dag}\tilde{\textbf{S}}P_{t}^{\dag}+
P_{t}^{\ddag}\Upsilon_{t}\textbf{H}_{F}^{'}\textbf{H}_{F}+\textbf{Q}=0,\\
&P_{T}^{\dag}=\textbf{G}.\nonumber
\end{align}

Finally, we calculate the filtering equations for $\tilde{\textbf{X}}$ and $\hat{\textbf{X}}$. Applying (\ref{in5}) to (\ref{in4}),  it yields (\ref{c20}). We then compute the filtering
$\hat{\textbf{X}}$, it is done by the Kalman-Bucy filtering theory. Define an error variable $e_{t}:=\textbf{X}_{t}-\tilde{\textbf{X}}_{t}$ in prior.
Considering the first and the forth equations of (\ref{eq3.12}) and (\ref{in4}), it can be
verified that $\textbf{X}$, $e$ and $Z^{2}$ construct the following signal system:
\begin{equation}\label{c23}
\begin{aligned}
\begin{cases}
d\textbf{X}_{t}=\Big(\textbf{A}\textbf{X}_{t}+\tilde{\textbf{S}}P^{\dag}_{t}\tilde{\textbf{X}}_{t}+[
(\tilde{\textbf{S}}+\hat{\textbf{S}})P_{t}-\tilde{\textbf{S}}P^{\dag}_{t}]\\
~~~~~~~~~~~~~~~~\times\hat{\textbf{X}}_{t}\Big)dt+\textbf{D}dW_{t},\\
~~de_{t}=(\textbf{A}-\Sigma_{t}\textbf{H}^{'}\textbf{H})e_{t}+\textbf{D}dW_{t}
-\Sigma_{t}\textbf{H}^{'}_{L}dV_{t}^{1}\\
~~~~~~~~~~~~~~~~-\Sigma_{t}\textbf{H}^{'}_{F}dV_{t}^{2},\\
~dZ_{t}^{2}=\textbf{H}_{F}\textbf{X}_{t}dt+dV_{t}^{2},\\
~~~\textbf{X}_{0}=\gamma,~~e_{0}=\gamma-\mathbb{E}[\gamma],~~Z_{0}^{2}=0.
\end{cases}
\end{aligned}
\end{equation}

Via (\ref{c22}), they can be rewritten, in a compact form, as
\begin{equation}\label{c24}
\begin{aligned}
\begin{cases}
~dx_{t}=\left(F_{t}x_{t}+\mathcal{L}(\hat{\textbf{X}}_{t})\right)dt+\check{D}dW_{t}+G_{t}^{\dag}dV_{t}^{1}\\
~~~~~~~~~~~~~~~~+G_{t}^{\ddag}dV_{t}^{2},\\
\mathcal{L}(\hat{\textbf{X}}_{t})=\mathrm{col}\left(\left[
(\tilde{\textbf{S}}+\hat{\textbf{S}})P_{t}-\tilde{\textbf{S}}P^{\dag}_{t}\right]\hat{\textbf{X}}_{t};0\right),\\
dZ^{2}_{t}=\check{H}_{F}x_{t}dt+dV_{t}^{2},\\
~~x_{0}=\mathrm{col}(\gamma;\gamma-\mathbb{E}[\gamma]),~~~~Z_{0}^{2}=0,
\end{cases}
\end{aligned}
\end{equation}
where $x_{t}:=\mathrm{col}(\textbf{X}_{t};e_{t})$.

It is a signal model with common noise. Based on the Kalman-Bucy filtering theory~\cite{Xiong2008}, its estimator equation is given by
\begin{equation}\label{c25}
\begin{aligned}
\begin{cases}
d\hat{x}_{t}=\left(F_{t}\hat{x}_{t}+\mathcal{L}(\hat{\textbf{X}}_{t})\right)dt
+\left(G_{t}^{\ddag}+\Xi_{t}\check{H}_{F}^{'}\right)\\
~~~~~~~~~~~~~~~\times\left(dZ^{2}_{t}-\check{H}_{F}\hat{x}_{t}dt\right),\\
~~\hat{x}_{0}=\mathrm{col}\left(\mathbb{E}[\gamma];0\right),
\end{cases}
\end{aligned}
\end{equation}
where $\hat{x}_{t}:=\mathbb{E}[x_{t}|\mathcal{Z}^{F}_{t}]$ and the covariance matrix $\Xi$ is introduced in (\ref{c19}).

Note also that $\hat{e}_{t}=\hat{\textbf{X}}_{t}-\mathbb{E}[\tilde{\textbf{X}}_{t}|\mathcal{Z}^{F}_{t}]=0$, $t\in[0,T]$.
In other word, the second coordinate of the state estimate of (\ref{c25}) equals zero. To simplify (\ref{c25}), it yields (\ref{c21}). Comparing (\ref{c21}) with (\ref{b16}), we get $\Upsilon_{t}=\Xi_{t,11}$, $t\in [0,T]$.
Then, (\ref{pa44444}) is exactly the first equation of (\ref{c19}).
\end{proof}

\section{Application Examples}

In this section we give some application examples of our main result.

\textbf{Example}~1.~(Government Debt Stabilization Problem~\cite{Aarle1995})~We consider the following differential game, on government debt stabilization, with the fiscal authority acting as Stackelberg leader
\begin{equation}\label{c26}
\begin{aligned}
dd_{t}&=(rd_{t}+f_{t}-m_{t})dt,\\
dz_{t}^{1}&=\rho_{1} d_{t}dt+dV^{1}_{t},\\
dz_{t}^{2}&=\rho_{2}d_{t}dt+dV^{2}_{t},
\end{aligned}
\end{equation}
where the government debt $d_{t}$ is the state variable, and the issue of base money $m_{t}$ and primary fiscal deficits $f_{t}$, are adjusted by the monetary authority and the fiscal authority respectively. $\{z^{1}_{t}\}$ and $\{z^{2}_{t}\}$ are two observation processes, with $\{z^{2}_{t}\}$ being available by the monetary authority and $\{z^{1}_{t},z^{2}_{t}\}$ being available by the fiscal authority.  We assume that these two authorities rely on their respective observation information to implement policies.

The objective of the fiscal authority is to minimize a sum of the primary fiscal deficit, base money growth and government debt
\begin{equation}\label{c028}
\begin{aligned}
\mathcal{J}^{L}(f,m)&=\mathbb{E}\int_{0}^{T}\frac{1}{2}\Big(\lambda(d_{t}-\bar{d})^{2}
+(f_{t}-\bar{f})^{2}\\
&~~~~+\eta (m_{t}-\bar{m})^{2}\Big)dt,
\end{aligned}
\end{equation}
where $\bar{f}$, $\bar{m}$ and $\bar{d}$ represent exogenous policy targets for base money growth, the primary fiscal deficit and public debt, respectively.

The monetary authority sets the growth of base money so as to minimize the following loss function
\begin{equation}\label{c27}
\begin{aligned}
\mathcal{J}^{F}(f,m)=\mathbb{E}\int_{0}^{T}\frac{1}{2}\Big(\kappa(d_{t}-\bar{d})^{2}
+(m_{t}-\bar{m})^{2}\Big)dt,
\end{aligned}
\end{equation}
where $\frac{1}{\kappa}(>0)$ measures how conservative the central bank is with respect to the money growth.

Letting $x^{1}_{t}:=d_{t}-\bar{d}$, $x^{2}_{t}:=r\bar{d}+\bar{f}-\bar{m}$, $X_{t}:={\rm col}(x^{1}_{t};x^{2}_{t})$, $U^{F}_{t}=m_{t}-\bar{m}$, $U^{L}_{t}=f_{t}-\bar{f}$, $Z_{t}^{1}:=z_{t}^{1}-(\rho_{1}\bar{d})t$, $Z_{t}^{2}:=z_{t}^{2}-(\rho_{2}\bar{d})t$, (\ref{c26}), (\ref{c028}) and (\ref{c27}) can be written in the form~(\ref{eq2.1a}) and (\ref{eq2.1b}), with
\begin{align*}
A&=\textrm{diag}(r,0),~B_{F}=\textrm{col}(-1;0),~B_{L}=\textrm{col}(1;0),\\
D&=0,~H_{1}=[\rho_{1},0],~H_{2}=[\rho_{2},0],\\
Q_{F}&=\textrm{diag}(\kappa,0),~R_{FL}=0,~R_{FF}=1,~G_{F}=0,\\
Q_{L}&=\textrm{diag}(\lambda,0),~R_{LL}=1,~R_{LF}=\eta,~G_{L}=0.
\end{align*}
Clearly, they satisfy assumptions \textbf{A1a}, \textbf{A1b}. We can then calculate the optimal strategies for the monetary authority and the fiscal authority using Theorem~\ref{lem3b}.

\textbf{Example}~2. ~(Linear Optimal Servo Problem~\cite{Rreindler1969}) Consider a signal model formed by the state and measurement equations of two plants (or agents)
\begin{equation}\label{c28}
\begin{aligned}
\textrm{Plant}~1:~dx^{1}_{t}&=(A_{1}x^{1}_{t}+B_{1}U^{L}_{t})dt+D_{1}dW_{t}^{1},\\
\textrm{Plant}~2:~dx^{2}_{t}&=(A_{1}x^{2}_{t}+B_{2}U^{F}_{t})dt+D_{2}dW_{t}^{2},\\
dZ_{t}^{1}&=h_{1}\begin{bmatrix}
                   x^{1}_{t}  \\
                   x^{2}_{t} \\
                 \end{bmatrix}
 dt+dV^{1}_{t},\\
dZ_{t}^{2}&=h_{2}\begin{bmatrix}
                   x^{1}_{t}  \\
                   x^{2}_{t} \\
                 \end{bmatrix}dt+dV^{2}_{t}.
\end{aligned}
\end{equation}
We assume that Plant~1 has access to more information, and therefore acts as the Stackelberg leader. More precisely, the linear noisy measurement $\{Z_{t}^{2}\}$ is available to Plant~2, whereas $\{Z_{t}^{1},Z_{t}^{2}\}$ are available to Plant~1.

The objective of Plant~2 is to track a linear combination of the state of Plant~1 and a command input
\begin{equation}\label{c29}
\begin{aligned}
\mathcal{J}^{F}(U^{L},U^{F})&=\mathbb{E}\int_{0}^{T}
\frac{1}{2}\Big[|x^{2}_{t}-(\Gamma_{21}x^{1}_{t}+\Gamma_{22}s_{t})|^{2}\\
&~~~~+|U^{F}_{t}|^{2}\Big]dt,
\end{aligned}
\end{equation}
where the command input  $s_{t}$, generated by a command generator, is described by~\cite{Rreindler1969}
\begin{equation}\label{c30}
\begin{aligned}
ds_{t}=Ls_{t}dt.
\end{aligned}
\end{equation}
The above setup includes as a particular case the one in which the command input is a prescribed time-function~\cite{Rreindler1969}, which is often called a tracking problem.

Plant~2 has a similar objective function, given by
\begin{equation}\label{c31}
\begin{aligned}
\mathcal{J}^{L}(U^{L},U^{F})&=\mathbb{E}\int_{0}^{T}
\frac{1}{2}\Big[|x^{1}_{t}-(\Gamma_{12}x^{2}_{t}+\Gamma_{11}s_{t})|^{2}\\
&~~~~+\theta|U^{F}_{t}|^{2}+(1-\theta)|U^{L}_{t}|^{2}\Big]dt,
\end{aligned}
\end{equation}
where $\theta\in(0,1)$.

Defining $X_{t}:={\rm col}(x^{1}_{t};x^{2}_{t};s_{t})$, $W_{t}:={\rm col}(W^{1}_{t};W^{2}_{t})$, then (\ref{c28}), (\ref{c29}), (\ref{c30}), (\ref{c31}) can be written in the form~(\ref{eq2.1a}) and (\ref{eq2.1b}), where
\begin{align*}
A&=\textrm{diag}(A,A,L),~B_{F}=\textrm{col}(0;B_{2};0),\\
B_{L}&=\textrm{col}(B_{1};0;0),~H_{1}=[h_{1},0],~H_{2}=[h_{2},0],\\
R_{FL}&=0,~R_{FF}=I,~G_{F}=0,\\
R_{LF}&=\theta I,~R_{LL}=(1-\theta)I,~G_{L}=0,
\end{align*}
and
\begin{small}
\begin{align*}
D&=\begin{bmatrix}
     D_{1} & 0 \\
     0 & D_{2} \\
     0 & 0 \\
   \end{bmatrix},~Q_{F}=\begin{bmatrix}
                          \Gamma_{21}^{'}\Gamma_{21} & -\Gamma^{'}_{21} & \Gamma_{21}^{'}\Gamma_{22} \\
                          -\Gamma_{21} & I & -\Gamma_{22} \\
                          \Gamma_{22}^{'}\Gamma_{21} & -\Gamma_{22}^{'} & \Gamma_{22}^{'}\Gamma_{22}\\
                        \end{bmatrix},\\
    Q_{L}&=\begin{bmatrix}
                          I & -\Gamma_{12} & -\Gamma_{11} \\
                          -\Gamma^{'}_{12} & \Gamma^{'}_{12}\Gamma_{12} & \Gamma^{'}_{12}\Gamma_{11} \\
                          -\Gamma_{11}^{'} & \Gamma_{11}^{'}\Gamma_{12} & \Gamma_{11}^{'}\Gamma_{11}\\
                        \end{bmatrix}.
\end{align*}
\end{small}
It is easy to check that assumptions \textbf{A1a}, \textbf{A1b} are satisfied. Theorem~\ref{lem3b} then can be used to derive the optimal control strategies.

\section{Conclusion and Future work}\label{conclusion}

In this work we studied the LQG differential Stackelberg game under nested observation information pattern. We provided conditions to guarantee existence  of the solution and recasted the original problem as that of solving a new FB-SDE. Using this result we gave explicit forms for the leader's and follower's control strategies.
The possible extension includes the related problem of the infinite horizon case or  the follower containing multiple plants.  Further research contains dealing with more general information patterns, referring to~\cite{Charalambos2017Noisy},\cite{Chamralambous2016}, etc. and nonlinear dynamic system model with finite-dimensional filters, referring to~\cite{V1981Exact,Chamralambous1997Certain,2000tang}, etc.

\bigskip

\section{Acknowledgments}
The authors would like to thank Prof. Juanjuan Xu of Shandong University for her help and relevant discussions.

\section{Appendix}
\textit{Proof of \textbf{Lemma}~\ref{lem1}.}\par
\textit{Necessity}. We split the necessity proof in steps:\\
Step~1. (Convex perturbation) Given $U^L\in \mathcal{U}^L$, for any $\varepsilon\in[0,1]$, we perturb
$U^{F\star}$ to $U^{F,\varepsilon}$ in the following way
\begin{align}\label{aa001}
U^{F,\varepsilon}:=U^{F\star}+\varepsilon \delta U^{F},~~\delta U^{F}:= U^{F}-U^{F\star},
\end{align}
where $U^{F\star}\in \mathcal{U}^F$ and $U^{F} \in \mathcal{U}^F$ are denoted to be the optimal strategy and any other one respectively.

From the convexity of the admissible set $\mathcal{U}^{F}$, it yields that $U^{F,\varepsilon} \in \mathcal{U}^{F}$.

Step~2. (Variation calculation) Rewrite (\ref{eq3.2}) in a compact form
\begin{small}
\begin{align*}
d\Big[
   \begin{array}{c}
     X_{t} \\
     Z_{t}^{2} \\
   \end{array}
 \Big]&=\Big(\Big[
           \begin{array}{cc}
             A & 0 \\
             H_{2} & 0 \\
           \end{array}
         \Big]\Big[
   \begin{array}{c}
     X_{t} \\
     Z_{t}^{2} \\
   \end{array}
 \Big]+\Big[
           \begin{array}{c}
             B_{F} \\
             0 \\
           \end{array}
         \Big]U_{t}^{F}+\Big[
                            \begin{array}{c}
                              B_{L}U^{L}_{t} \\
                              0 \\
                            \end{array}
                          \Big]\Big)dt\\
                          &~~~+\Big[
                                \begin{array}{cc}
                                  D & 0 \\
                                  0 & I \\
                                \end{array}
                              \Big]\Big[
                                       \begin{array}{c}
                                         dW_{t} \\
                                         dV_{t}^{2} \\
                                       \end{array}
                                     \Big].
\end{align*}
\end{small}
Suppose that ${\rm col}(X^{\star},Z^{2\star}):=(X(U^{F\star}), Z^{2}(U^{F\star}))$ and ${\rm col}(X^{\varepsilon},Z^{2,\varepsilon}):=(X(U^{F,\varepsilon}), Z^{2}(U^{F,\varepsilon}))$  denote the augmented state processes under the two strategies above respectively. From the above equations, we can clearly decompose $X^{\varepsilon}$ as $X^{\star}+\varepsilon \delta X$ and $Z^{2,\varepsilon}$ as $Z^{2,\star}+\varepsilon \delta Z^{2}$, where
\begin{small}
\begin{equation}\label{aa2}
\begin{aligned}
d\Big[
   \begin{array}{c}
     \delta X_{t} \\
     \delta Z_{t}^{2} \\
   \end{array}
 \Big]&=\Big(\Big[
           \begin{array}{cc}
             A & 0 \\
             H_{2} & 0 \\
           \end{array}
         \Big]\Big[
   \begin{array}{c}
     \delta X_{t} \\
     \delta Z_{t}^{2} \\
   \end{array}
 \Big]+\Big[
           \begin{array}{c}
             B_{F} \\
             0 \\
           \end{array}
         \Big]\delta U_{t}^{F}\Big)dt.
\end{aligned}
\end{equation}
\end{small}

In another hand, calculate the G\^{a}teaux differential of $\mathcal{J}^{F}$ at $U^{F\star}$ in the direction $\delta U^{F}$
\begin{equation}\label{aa3}
\begin{aligned}
&~~~~d\mathcal{J}^{F}(\cdot,U^{F\star};\delta U^{F})\\
&=\lim_{\varepsilon \rightarrow 0}
\frac{\mathcal{J}^{F}(\cdot,U^{F\star}+\varepsilon \delta U^{F})
-\mathcal{J}^{F}(\cdot,U^{F\star})}{\varepsilon}\\
&=\mathbb{E}\int_{0}^{T}\Big(\langle Q_{F}X_{t}^{\star}, \delta X_{t}\rangle
+\langle R_{FF}U_{t}^{F\star}, \delta U_{t}^{F}\rangle\Big) dt\\
&~~+\mathbb{E}\langle G_{F} X_{T}^{\star}, \delta X_{T}\rangle
\end{aligned}
\end{equation}

Step~3. (Terminal condition conversion) Applying It\^{o}'s formula to $t\mapsto \langle p^{F}_{t}, \delta X_{t}\rangle+
\langle \underline{p}^{F}_{t},\delta Z_{t}^{2}\rangle$, with $p^{F}$ and $\underline{p}^{F}$ being two processes to be determined, it yields
\begin{equation}\label{aa4}
\begin{aligned}
\int_{0}^{T}d\Big(\langle p^{F}_{t}, \delta X_{t}\rangle+
\langle \underline{p}^{F}_{t},\delta Z_{t}^{2}\rangle\Big)
=\langle p^{F}_{T}, \delta X_{T}\rangle+
\langle \underline{p}^{F}_{T},\delta Z_{T}^{2}\rangle
\end{aligned}
\end{equation}

The main idea is to substitute the terminal term of (\ref{aa3}) with the right hand of (\ref{aa4}). Thus, we pin down the terminal value $p^{F}_{T}=G_{F} X_{T}^{\star}$
and $\underline{p}^{F}_{T}=0$. Applying (\ref{aa2}) to the left hand side of (\ref{aa4}), then inserting (\ref{aa4}) into (\ref{aa3}), it can be verified that
\begin{equation}\label{aa5}
\begin{aligned}
&~~~~d\mathcal{J}^{F}(\cdot,U^{F\star};\delta U^{F})\\
&=\mathbb{E}\int_{0}^{T}\Big(\langle dp^{F}_{t}+A^{'}p^{F}_{t}dt+
Q_{F}X_{t}^{\star}dt, \delta X_{t}\rangle\\
&~~+\langle d\underline{p}^{F}_{t}+H^{2'}\underline{p}^{F}_{t}dt,
\delta Z^{2}_{t}\rangle\\
&~~~+\langle B_{F}^{'}p^{F}_{t}+R_{FF}U_{t}^{F\star}, \delta U_{t}^{F}\rangle dt\Big).
\end{aligned}
\end{equation}

From (\ref{aa5}), it hint us, by the arbitrariness of $\delta X$ and $\delta Z^{2}$,  that
\begin{equation}\label{aa6}
\begin{aligned}
&dp^{F}_{t}+A^{'}p^{F}_{t}dt+Q_{F}X_{t}^{\star}dt=0,~~p^{F}_{T}=G_{F} X_{T}^{\star},\\
&d\underline{p}^{F}_{t}+H^{2'}\underline{p}^{F}_{t}dt=0,~~\underline{p}^{F}_{T}=0.
\end{aligned}
\end{equation}

But the state processes of the above two equations does not satisfy the adaptiveness. The BSDEs theory provided a valid method to deal with it~\cite{Yong1999}. For detail,  modify them to the following BSDEs
\begin{equation}\label{aa7}
\begin{aligned}
&dp^{F}_{t}=-[A^{'}p^{F}_{t}+Q_{F}X_{t}^{\star}]dt+q^{F}_{t}dW_{t}
+r^{F}_{t}dV_{t}^{2},\\
&d\underline{p}^{F}_{t}=-H^{2'}\underline{p}^{F}_{t}dt
+\underline{q}^{F}_{t}dW_{t}+\underline{r}^{F}_{t}dV_{t}^{2},\\
&p^{F}_{T}=G_{F} X_{T}^{\star},~~\underline{p}^{F}_{T}=0.
\end{aligned}
\end{equation}
In the above, the martingale representation theorem is used to correct the adaptiveness, Chapter~7~\cite{Yong1999}. This is why four stochastic integrals appear.

Notice that $\underline{p}^{F}=0$, $\underline{q}^{F}=0$, $\underline{r}^{F}=0$ satisfy the second equation. From the uniqueness of the solution of the linear BSDE, referring to the same chapter as above, it  is exactly the unique solution. Thus, (\ref{eq3.4}) is obtained.

Step~4. ($\sigma$-sub-algebra projection) Now, (\ref{aa5}) has changed to
\begin{equation}\label{aa8}
\begin{aligned}
&~~~~d\mathcal{J}^{F}(\cdot,U^{F\star};\delta U^{F})\\
&=\mathbb{E}\int_{0}^{T}\langle B_{F}^{'}p^{F}_{t}+R_{FF}U_{t}^{F}, \delta U_{t}^{F}\rangle dt\\
&=\mathbb{E}\int_{0}^{T}\langle \mathbb{E}[B_{F}^{'}p^{F}_{t}+R_{FF}U_{t}^{F\star}|\mathcal{Z}_{t}^{F}], \delta U_{t}^{F}\rangle dt,
\end{aligned}
\end{equation}
where the tower property of conditional expectation is used in the second equality because the follower
implements strategy relying on the $\sigma$-sub-algebra $\mathcal{Z}_{t}^{F}$ at
time $t$. Finally, the arbitrariness of $\delta U^{F}$ leads to (\ref{eq3.3}).

\textit{Sufficiency} Given $U^{L}\in \mathcal{U}^{L}$, letting $U^{F\star}$ and $U^{F}$ be the same as before, $X^{\star}$ and $X$ denote the corresponding state respectively.  We need calculate the difference $\mathcal{J}^{F}(\cdot,U^{F})-\mathcal{J}^{F}(\cdot,U^{F\star})$. The convexity of $\mathcal{J}^{F}$ with respect to the state and strategy variables lead to
\begin{equation}\label{aa9}
\begin{aligned}
&~~~\mathcal{J}^{F}(\cdot,U^{F})-\mathcal{J}^{F}(\cdot,U^{F\star})\\
&~\geq \mathbb{E}\int_{0}^{T}\Big(\langle Q_{F}X^{\star}_{t},\psi_{t}(X)\rangle+\langle R_{FF}U^{F\star}_{t},\psi_{t}(U^{F})\rangle\Big)dt\\
&~~+\mathbb{E}\langle G_{F}X_{T}^{\star},\psi_{T}(X)\rangle,
\end{aligned}
\end{equation}
where $\psi_{t}(X):=X_{t}-X_{t}^{\star}$, $\psi_{t}(U^{F}):= U^{F}_{t}-U^{F\star}_{t}$.

Next, applying It\^{o}'s formula to $t\mapsto \langle p^{F}_{t}, \psi_{t}(X)\rangle$, it gives
 \begin{equation}\label{aa10}
\begin{aligned}
&~~~\langle p^{F}_{T},\psi_{T}(X)\rangle=\langle p^{F}_{0},\psi_{0}(X)\rangle\\
&~+
\int_{0}^{T}\langle -Q_{F}X_{t}^{\star},\psi_{t}(X)\rangle+\langle p^{F}_{t}, B_{F}\psi_{t}(U^{F})\rangle dt.
\end{aligned}
\end{equation}

Inserting (\ref{aa10}) into (\ref{aa9}) and  noticing $\psi_{0}(X)=0$, it yields
$\mathcal{J}^{F}(\cdot,U^{F})\geq \mathcal{J}^{F}(\cdot,U^{F\star})$.
Sufficiency of (\ref{eq3.3}) is verified.

\bigskip

\textit{Proof of \textbf{Lemma}~\ref{lem2}.}\par
\textit{Necessity}. Its proof is similar to the part of necessity proof of Lemma~\ref{lem1}. We only give a framework. Firstly, a convex perturbation, similar with (\ref{aa001}), is taken for $U^{L}$. The G\^{a}teaux differential of $\mathcal{J}^{L}$ at $U^{L\star}$ in the direction $\delta U^{L}$ is
\begin{equation}\label{aa11}
\begin{aligned}
&~~~~d\mathcal{J}^{L}(U^{L\star},U^{F\star};\delta U^{L})\\
&=\lim_{\varepsilon \rightarrow 0}
\frac{\mathcal{J}^{L}(U^{L\star}+\varepsilon \delta U^{L\star},U^{F\star})
-\mathcal{J}^{L}(U^{L\star},U^{F\star})}{\varepsilon}\\
&=\mathbb{E}\int_{0}^{T}\Big(\langle Q_{L}X_{t}^{\star}, \delta X_{t}\rangle+\langle R_{LL}U_{t}^{L\star}, \delta U_{t}^{L}\rangle\\
&~~+\langle R_{LF}R_{FF}^{-1}B_{F}^{'}\hat{p}_{t}^{F\star}, R_{FF}^{-1}B_{F}^{'}\delta \hat{p}_{t}^{F}\rangle\Big) dt\\
&~~~+\mathbb{E}\langle G_{L} X_{T}^{\star}, \delta X_{T}\rangle,
\end{aligned}
\end{equation}
where $X^{\star}:=X(U^{L\star})$ and $p^{F\star}:=p^{F}(U^{L\star})$ are the state processes corresponding with $U^{L\star}$.

Since there are two equations of $X$'s and $p^{F}$'s in (\ref{eq3.6}), we need to
apply It\^{o}'s formula to $t\mapsto \langle\delta X_{t},p_{t}^{L}\rangle-
\langle \delta p^{F}_{t}, Y_{t}\rangle$ to substitute the terminal term of
(\ref{aa11}), where $p^{L}$ and $Y$ are two processes to be determined. After doing it, may refer to (\ref{suffi1}), it can be checked that
\begin{equation}\label{aa12}
\begin{aligned}
&~~~~d\mathcal{J}^{L}(U^{L\star},U^{F\star};\delta U^{L})\\
&=\mathbb{E}\int_{0}^{T}\langle B_{L}^{'}p_{t}^{L}+R_{LL}U_{t}^{L\star}, \delta U_{t}^{L}\rangle dt.
\end{aligned}
\end{equation}

Finally, a similar reason with (\ref{aa8}) leads to the desired result.

\bibliographystyle{plain}

\bibliography{sgame}

\end{document}